\newtheorem{mainthm}{Theorem}
\numberwithin{equation}{section}
\theoremstyle{plain}
\newtheorem{theorem}[equation]{Theorem}
\newtheorem{lemma}[equation]{Lemma}
\newtheorem{proposition}[equation]{Proposition}
\newtheorem{corollary}[equation]{Corollary}
\theoremstyle{remark}
\newtheorem{remark}[equation]{Remark}
\newtheorem*{remark*}{Remark}
\theoremstyle{definition}
\newtheorem{definition}[equation]{Definition}
\newcommand{\D}{\displaystyle}
\def\Zee{\mathbb{Z}}
\def\Q{\mathbb{Q}}
\def\R{\mathbb{R}}
\def\Cee{\mathbb{C}}
\def\Id{\operatorname{Id}}
\def\End{\operatorname{End}}
\newcommand{\bQ}{\mathbb{Q}}
\newcommand{\bR}{\mathbb{R}}
\newcommand{\bC}{\mathbb{C}}
\newcommand{\Res}{\mathrm{Res}}
\newcommand{\Nm}{\mathrm{Nm}}
\newcommand{\Spin}{\mathrm{Spin}}
\newcommand{\SO}{\mathrm{SO}}
\newcommand{\Sp}{\mathrm{Sp}}
\newcommand{\Gal}{\mathrm{Gal}}
\newcommand{\SU}{\mathrm{SU}}
\newcommand{\GL}{\mathrm{GL}}
\newcommand{\Hg}{\mathrm{Hg}}
\newcommand{\calD}{\mathcal{D}}
\newcommand{\sX}{\mathscr{X}}
\newcommand{\calV}{\mathscr{V}}
\newcommand{\I}{\mathrm{I}}
\newcommand{\II}{\mathrm{II}}
\title[Hermitian variations of Calabi--Yau type with real multiplication]{On some  Hermitian variations of Hodge structure of Calabi--Yau type with real multiplication} 
\date{\today}
\begin{document}

\author[R. Friedman]{Robert Friedman}
\address{Columbia University, Department of Mathematics, New York, NY 10027}
\email{rf@math.columbia.edu}
\author[R. Laza]{Radu Laza}
\address{Stony Brook University, Department of Mathematics,  Stony Brook, NY 11794}
\email{rlaza@math.sunysb.edu}
\thanks{The  second author was partially supported by NSF grant DMS-1200875 and a Sloan Fellowship}

\begin{abstract}
We prove that, for every totally real number field $E_0$, there exists a $\bQ$-variation of Hodge structure $\calV$ of Calabi--Yau threefold type  with associated endomorphism algebra $E_0$   such that the unique irreducible factor of  Calabi-Yau type of $\calV_\bR$ is the canonical $\bR$-VHS of CY type over the Hermitian symmetric domain $\II_6$, associated to the real group $\SO^*(12)$. The main point is a rationality result for the half spin representations of a form of the group $\SO^*(4m)$ defined over a number field.
\end{abstract}
\bibliographystyle{alpha}
\maketitle

\section*{Introduction}
A Hodge structure of \textsl{Calabi--Yau or CY type} is an effective weight $n$ Hodge structure with $h^{n,0}=1$.   Work of Gross \cite{bgross} and Sheng--Zuo \cite{sz} shows  that every Hermitian symmetric domain $\calD$ carries a canonical $\bR$-variation of Hodge structure $\calV$ of CY type (cf.\ also \cite[\S2]{FL} for more discussion). Furthermore, every other equivariant $\bR$-VHS (or \textsl{Hermitian VHS}) of CY type on $\calD$ is obtained from $\calV$ using certain standard constructions (see \cite[Theorem 2.22]{FL}). For example, each of the four rank $3$ Hermitian symmetric tube domains $\calD$, namely $\mathrm{III}_3$, $\mathrm{I}_{3,3}$, $\mathrm {II}_6$, and $\mathrm{EVII}$ (corresponding to the real Lie groups $\Sp(6,\bR)$, $\SU(3,3)$, $\SO^*(12)$, and $\mathrm{E}_{7,3}$ respectively), carries a weight $3$  $\bR$-VHS of CY type,  with the relevant Hodge number $h^{2,1}=6$, $9$, $15$, and $27$ respectively, and every primitive irreducible weight $3$ Hermitian VHS of CY type which is also of tube type is of this form.  Here primitive means that the VHS is not induced from a lower weight VHS in an obvious sense, and tube type means that the corresponding complex VHS is irreducible. This gives a satisfactory classification (over $\bR$) of   Hermitian VHS of CY type analogous to the classification of Satake \cite{satake} and Deligne \cite{dshimura} of totally geodesic holomorphic embeddings of Hermitian symmetric domains into the Siegel upper half space $\frak H_g$, or equivalently Hermitian VHS of abelian variety type. 

The analogous classification over $\bQ$ of  Hermitian VHS $\calV$ of Calabi-Yau type is much more difficult. The weight $2$ case, or \textsl{$K3$ type}, was analyzed by Zarhin \cite{zarhin} and van Geemen \cite{vgk3}. A basic invariant measuring the difference between the classification over $\bQ$ and over $\bR$ is the algebra  $E:=\End_{\Hg}(V_s)$ of Hodge endomorphisms  of a general fiber $V_s$ of $\calV$. In the Calabi-Yau type case, $E$ is either a totally real field or a CM field (see \cite{zarhin} or \cite[Prop.\  3.1]{FL}). If $E=E_0$ is a totally real field, we say that the Hermitian VHS has \textsl{weak real multiplication by $E_0$}. In weight $3$, we showed in \cite[Theorem 3.18]{FL} that there are at most two primitive cases of Hermitian VHS of CY threefold type defined over $\bQ$ with non-trivial weak real multiplication.  These  cases correspond to the domains $\I_{3,3}$ and $\II_6$ associated to the groups  $\SU(3,3)$  and $\SO^*(12)$ respectively. In the two other tube domain cases mentioned above, $\mathrm{III}_3$ and $\mathrm{EVII}$,   non-trivial weak real multiplication cannot arise. 
For the $\SU(3,3)$ case, we showed in \cite{FL} that every totally real field $E_0$ can be realized as the endomorphism algebra of a Hermitian VHS of CY type defined over $\bQ$. This result is similar to that of van Geemen \cite{vgk3} for $K3$ type,  but the representation theory is more involved. This paper is devoted to the remaining case of the group $\SO^*(12)$ and  corresponding Hermitian symmetric space $\II_6$. More precisely, we prove the following (using freely the notation of \cite{FL}):

\begin{mainthm} Let $E_0$ be a totally real field with $d = [E_0:\Q]$. Then there exists an almost simple $\Q$-group $G$ and an irreducible weight three Hodge representation $\rho\colon G \to \GL(V)$ with a corresponding weight three Hermitian VHS $\calV$ such that the generic endomorphism algebra for $\calV$ has real multiplication by $E_0$ and, over $\R$, $\calV = \bigoplus_{i=1}^d\calV_i$, where $\calV_1$  corresponds to a half spin representation of the spin double cover of the real group $\SO^*(12)$ and, for   $i>1$,  $\calV_i$    corresponds  to (a Tate twist of) a  half spin representation of the spin double cover of the real group $\SO(2, 10)$.
\end{mainthm}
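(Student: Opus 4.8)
The plan is to construct the required $\bQ$-VHS by restriction of scalars from a carefully chosen $E_0$-form of $\Spin(12)$, the crucial input being a rationality statement for the half spin representation of that form.

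I would first record the two real building blocks. The half spin representation $S^{+}$ of $\Spin(12,\bC)$ has dimension $32$ and, since $6\equiv 2\pmod 4$, carries an invariant alternating form, so that $\Spin(12,\bC)\hookrightarrow\Sp(32,\bC)$. Restricting $S^{+}$ to the spin double cover of $\SO^{*}(12)$, with the Hermitian cocharacter attached to $\II_6$, recovers the canonical $\bR$-VHS of Calabi--Yau type on $\II_6$, with Hodge numbers $(1,15,15,1)$ (see \cite{bgross,sz} and \cite{FL}). Restricting $S^{+}$ instead to the spin double cover of $\SO(2,10)$ and performing a Tate twist to place it in weight $3$ (this is the twist in the statement), one obtains a polarizable weight $3$ $\bR$-Hodge structure with Hodge numbers $(0,16,16,0)$. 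Both are real forms of $(S^{+},\text{alternating form})$, and an $\bR$-direct sum of one copy of the first and $d-1$ copies of the second has $h^{3,0}=1$, hence is of CY type.

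Next I would produce an $E_0$-form $H$ of $\Spin(12)$ with these completions. Both $\SO^{*}(12)$ and $\SO(2,10)$ are \emph{inner} forms of the split $\SO(12)$ (for $\SO^{*}(4m)$ this uses $m=3$), so I take $H$ to be the spin group of a rank $6$ skew-Hermitian space $(M,h)$ over a quaternion $E_0$-algebra $D$ with its canonical symplectic involution. Choosing $D$ to be ramified at exactly one distinguished real place $v_1$ of $E_0$ together with one auxiliary finite place, one gets $H\otimes_{E_0,v_1}\bR$ isomorphic to the spin double cover of $\SO^{*}(12)$; at every other real place $v$ of $E_0$, $D$ splits, so $(M,h)_v$ is Morita equivalent to a $12$-dimensional quadratic space over $\bR$, which I arrange (by prescribing its Witt index) to have signature $(2,10)$, giving $H\otimes_{E_0,v}\bR$ isomorphic to the spin double cover of $\SO(2,10)$.

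The heart of the argument is to arrange that $S^{+}$ descends to an $E_0$-rational representation of $H$. Since $H$ is an inner form of $\Spin(12)$, the obstruction is the Brauer class over $E_0$ of a component $C^{+}(M,h)$ of the even Clifford algebra of $(M,h)$; such a component exists as a genuine $E_0$-algebra exactly when $\disc(M,h)$ is trivial in $E_0^{\times}/(E_0^{\times})^{2}$, otherwise $\Gal(\bar E_0/E_0)$ interchanges the two half spin representations. Using the explicit formulas for the discriminant and the Clifford invariant of a skew-Hermitian form over a quaternion algebra, I would prescribe the local invariants of $D$, of $\disc(M,h)$ and of the Hasse-type invariant of $h$ so that $\disc(M,h)$ is trivial locally (hence globally) and $C^{+}(M,h)$ is split at every archimedean place --- automatic from the first paragraph, since the CY and the $\SO(2,10)$ representations are honest real representations --- and at every finite place other than the one where $D$ ramifies. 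Reciprocity for $\mathrm{Br}(E_0)$ then forces $C^{+}(M,h)$ to be split at the remaining place too, so $C^{+}(M,h)$ is globally split and $S^{+}$ is $E_0$-rational; and the Hasse principle for skew-Hermitian forms over an algebra with involution over a number field produces a global $(M,h)$ realizing these compatible local invariants. This local bookkeeping --- following the dependence of the Clifford invariant on $[D]$ and on the invariants of $h$, and reconciling the global reciprocity constraint with the signatures forced at the archimedean places --- is where I expect the real difficulty to lie; it is precisely the rationality result advertised in the abstract.

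Finally, granting such an $H$, I would set $G:=\Res_{E_0/\bQ}H$, an almost simple $\bQ$-group, $V:=\Res_{E_0/\bQ}(S^{+})$, and $\rho:=\Res_{E_0/\bQ}$ of the half spin representation, which is irreducible over $\bQ$ because $S^{+}$ is absolutely irreducible for $H$ and $\Gal(\bar\bQ/\bQ)$ permutes the archimedean components of $G$ transitively. Defining the Hodge cocharacter factorwise by the cocharacters of the first paragraph (the CY one at $v_1$, the twisted half spin one at the remaining real places) makes $(G,\rho)$, with this cocharacter, a weight $3$ Hodge representation of Calabi--Yau type in the sense of \cite{FL}; polarizability follows from the alternating form $\langle-,-\rangle$ on $S^{+}$ via $\mathrm{Tr}_{E_0/\bQ}\bigl(\lambda\langle-,-\rangle\bigr)$ for a suitable $\lambda\in E_0^{\times}$ whose sign at each real place is dictated by the Weil operator there (possible by weak approximation), and $h^{3,0}(V)=1$ because only the $v_1$-factor contributes. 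To identify the generic endomorphism algebra I would argue as in \cite{FL} for the $\SU(3,3)$ case: a Mumford--Tate/monodromy argument shows that a generic member of the resulting VHS $\calV$ has Mumford--Tate group all of $G$ (using that $H$ is almost simple), whence $\End_{\Hg}(V)=\End_{G}(V)$, which equals $E_0$ by Schur's lemma (giving $\End_{H}(S^{+})=E_0$) together with the behaviour of $\End$ under $\Res_{E_0/\bQ}$. Thus $\calV$ has real multiplication by $E_0$ and the stated decomposition $\calV_{\bR}=\bigoplus_{i=1}^{d}\calV_i$, which is the assertion of the theorem.
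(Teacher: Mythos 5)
Your overall skeleton (an $E_0$-form of $\Spin(12)$ with prescribed real completions, rationality of one half spin representation, then $\Res_{E_0/\bQ}$ and the Hodge-theoretic assembly as in \cite{FL}) coincides with the paper's, and your final paragraph is essentially the paper's reduction of Theorem 1 to Theorem 2. Where you genuinely diverge is in the key step: you encode the form of $\Spin(12)$ by a rank $6$ skew-Hermitian space $(M,h)$ over a quaternion algebra $D$ and identify the obstruction to $E_0$-rationality of $S^{+}$ with the Brauer class of a component of the even Clifford algebra (the Tits algebra), to be killed by local-global arguments. That framework is correct as far as it goes --- your archimedean computations (one split component at the $\SO^{*}(12)$ place, both split at the $\SO(2,10)$ places) and the reciprocity trick that handles the last ramified finite place are sound, and the construction is in fact equivalent to the paper's: the paper's conjugate-linear operator $J=B^{-1}\circ\Psi$ generates exactly your quaternion algebra $E[J]=D$, with $J^{2}=\lambda$.

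The genuine gap is that the heart of the matter --- the rationality result itself --- is asserted rather than proved. You reduce everything to ``prescribing the local invariants of $\disc(M,h)$ and of the Hasse-type invariant of $h$'' and then invoke ``the Hasse principle for skew-Hermitian forms,'' but the Hasse principle is a uniqueness statement; what you need is an existence theorem producing a global $(M,h)$ with prescribed completions, and such theorems come with compatibility (product-formula) constraints among the local invariants that you never write down or verify. Nor do you give the formulas expressing $\disc$ and the Clifford components of the adjoint involution in terms of $h$ (note, e.g., that at a split real place the signature $(2,10)$ form has nontrivial Hasse invariant but split even Clifford components, so the bookkeeping is not a tautology), nor check that ``trivial disc, signature $(2,10)$ at $\sigma_i$ for $i>1$, split Clifford component at every split finite place'' is simultaneously and globally realizable. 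This is precisely the content of the paper's Theorem 3.9/Corollaries 3.11--3.12: instead of local-global arithmetic, the paper writes down the explicit compatible pair $(b,\psi_{\delta,1})$ over the CM extension $E/E_0$ and constructs, via the Hodge $\star$-operator of $\psi|W_1$, a conjugate-linear $G_1$-equivariant operator $L_{+}$ on $\Res_{E/E_0}S^{+}$ with $(L_{+})^{2}=(-1)^{m}D\,\Id$ or $(-1)^{m}D\lambda\,\Id$, so that rationality becomes the concrete condition that this scalar be a norm from $E$, which is then checked directly (it is $\delta^{2N}$, a square) for $\psi_{\delta,1}$. Your route can very likely be completed, but as written the step that the abstract calls ``the main point'' is missing.
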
 

\begin{remark*} With notations as in the theorem, $\calV_1$ is a VHS of CY threefold type with Hodge numbers $(1,15,15,1)$. For $i>1$,  $\calV_i$ is (up to a Tate twist) a VHS of abelian variety type, with Hodge numbers $(0,16,16,0)$.  
\end{remark*}
Arguing as in the proof of Theorem 3.18 in \cite{FL}, it will be enough to show:

\begin{mainthm}\label{mainthm2} Let $E_0$ be a totally real field with $d = [E_0:\Q]$, and let $\sigma_1, \dots, \sigma_d$ be the different embeddings of $E_0$ in $\R$. We view $E_0$ as a subfield of $\R$ via $\sigma_1$. Then there exists a geometrically almost simple group $G_1$ defined over $E_0$ and a representation $\rho_1\colon G_1 \to \GL(V_1)$, where $V_1$ is an $E_0$-vector space, such that
\begin{enumerate}
\item[\rm(i)] The induced complex group $G_{1, \Cee} \cong \Spin(12)$ and $V_{1, \Cee}$ is the half spin representation $S^+$ of $\SO(12)$, in the notation of \cite{fultonharris}. 
\item[\rm(ii)] The induced real group $G_{1, \R}$ is isomorphic to the spin  double cover of $\SO^*(12)$.
\item[\rm(iii)] For $i> 1$, viewing $E_0$ as a subfield of $\R$ via the embedding $\sigma_i$, the induced real group $G_{i, \R}$ is isomorphic to the spin  double cover of $\SO(2, 10)$.
\end{enumerate} 
\end{mainthm}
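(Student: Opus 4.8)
The plan is to realize $G_1$ as the spin group of a suitable central simple $E_0$-algebra with orthogonal involution, to recover the half spin representation $V_1$ from one component of the associated Clifford algebra, and to reduce the $E_0$-rationality of $V_1$ to the splitting of that component; this splitting is the only serious point. Concretely, fix a quaternion division algebra $D$ over $E_0$ ramified at $\sigma_1$ and split at $\sigma_2,\dots,\sigma_d$ — such a $D$ exists (take a finite set of places of even cardinality containing $\sigma_1$) and we keep its finite ramification as a free parameter. Put $A=M_6(D)$, a central simple $E_0$-algebra of degree $12$, and let $\sigma=\sigma_h$ be the orthogonal involution adjoint to a skew-Hermitian form $h$ on the rank-six right $D$-module $D^6$, taken with respect to the canonical involution of $D$. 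Then $G_1:=\Spin(A,\sigma)$ is geometrically almost simple of type $D_6$, so $G_{1,\bC}\cong\Spin(12)$; once $\disc(\sigma_h)$ is a square one has $C(A,\sigma)=C^+(A,\sigma)\times C^-(A,\sigma)$ with $C^\pm$ central simple of degree $32$ over $E_0$, the modules afforded by $C^\pm\otimes_{E_0}\bC\cong M_{32}(\bC)$ are the half spin representations $S^\pm$ of $\SO(12)$, the classes $[C^\pm]$ lie in the $2$-torsion of $\mathrm{Br}(E_0)$ with $[C^+]+[C^-]=[A]=[D]$ (as $m=6$ is even), and the representation $\rho_1^+\colon G_1\to C^+(A,\sigma)^{\times}\subset\GL(V_1)$ is defined over $E_0$ exactly when $C^+(A,\sigma)$ is split. (Here I use the standard dictionary between central simple algebras with orthogonal involution of trivial discriminant and inner forms of $D_n$, and the realization of the two half spin representations via the two components of the Clifford algebra.)

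Next I would pin down the real places. At $\sigma_1$ one has $D\otimes_{E_0,\sigma_1}\bR=\bH$, so $A_{\sigma_1}=M_6(\bH)$; since a skew-Hermitian form over $\bH$ of a given rank is unique up to isometry, $(A,\sigma)_{\sigma_1}$ is forced to be the algebra with involution attached to $\SO^*(12)$, so $G_{1,\sigma_1}$ is the spin double cover of $\SO^*(12)$, and $\disc(\sigma_h)_{\sigma_1}$ is trivial — this triviality, valid precisely for $\SO^*(4m)$, is why the degree-$4m$ case is the favorable one. At $\sigma_i$ with $i>1$ one has $D_{\sigma_i}=M_2(\bR)$, and under Morita equivalence $h_{\sigma_i}$ corresponds to a nondegenerate symmetric bilinear form of rank $12$ over $\bR$; choose $h_{\sigma_i}$ so that this form has signature $(2,10)$, so that $G_{1,\sigma_i}$ is the spin double cover of $\SO(2,10)$ and the discriminant is again a square. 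By the Hasse principle for skew-Hermitian forms over a quaternion algebra over a number field there is a global skew-Hermitian $h$ of rank $6$ inducing the prescribed forms at all the Archimedean places (for any admissible choice of local forms at the finite places); its global discriminant is a square at every completion, hence a global square, so the discussion above applies. This yields (i)--(iii), provided $C^+(A,\sigma)$ can also be made split.

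It therefore remains — and this is the rationality result for half spin representations of a form of $\SO^*(4m)$ — to arrange $[C^+(A,\sigma)]=0$ in $\mathrm{Br}(E_0)$. Being $2$-torsion, $[C^+]$ is determined by its local invariants $\mathrm{inv}_v[C^+]\in\tfrac12\bZ/\bZ$, which sum to $0$. At the Archimedean places these invariants vanish: for $i>1$, the even Clifford algebra of a rank-$12$ form of signature $(2,10)$ is a product of two split $\bR$-algebras; and at $\sigma_1$ the identity $[C^+]_{\sigma_1}+[C^-]_{\sigma_1}=[\bH]$ forces exactly one of $C^{\pm}$ to be split there and the other to have class $[\bH]$ — equivalently, one half spin representation of the spin double cover of $\SO^*(12)$ is of real type (namely the one carrying the canonical CY-type VHS on $\II_6$) and the other of quaternionic type — so after labelling $C^+$ to be the split one (equivalently, fixing the labelling so that $V_{1,\bC}=S^+$), $\mathrm{inv}_{\sigma_1}[C^+]=0$. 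I would then compute $[C^+(A,\sigma)]$ explicitly in terms of $D$ and a diagonalization $h\simeq\langle d_1,\dots,d_6\rangle$ (with $d_i$ invertible pure quaternions, $d_i^2=a_i\in E_0^{\times}$) as a product of quaternion symbols built from $D$ and the $a_i$, and show that, using the remaining freedom in the finite ramification of $D$ and in the finite local invariants of $h$ — together with the Hasse principle of the previous paragraph — the finite local invariants of $[C^+]$ can all be made $0$ as well; then $C^+(A,\sigma)\cong M_{32}(E_0)$, and $V_1:=E_0^{32}$ with the $G_1$-action factoring through $C^+(A,\sigma)$ is the required $E_0$-rational half spin representation, so that (i)--(iii) hold.

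The step I expect to be the main obstacle is this last one: one must check that the coupled system of local conditions — the ramification of $D$, the local invariants of $h$ at the real and at the finite places, and the resulting value of $[C^+(A,\sigma)]$ — is globally consistent, i.e. that reciprocity does not compel $[C^+]$ to be ramified at some finite place. This is where one genuinely uses features peculiar to $\SO^*(4m)$ (the vanishing of the discriminant, and the precise form of the Clifford-algebra computation for quaternionic skew-Hermitian forms) rather than generalities about inner forms of $D_n$.
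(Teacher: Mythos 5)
Your framework (realizing the group as $\Spin(A,\sigma)$ with $A=M_6(D)$, identifying $E_0$-rationality of the half spin representation with the splitting of the corresponding component $C^{\pm}(A,\sigma)$ of the Clifford algebra, and the archimedean analysis, including the observation that exactly one of $C^{\pm}$ is split at $\sigma_1$ and both are split at $\sigma_i$, $i>1$) is sound and is genuinely different from the paper's method. But the decisive step is missing, and it is exactly the point the paper's abstract identifies as the main one: you never show that $[C^+]$ can be made trivial in $\mathrm{Br}(E_0)$, i.e.\ that the finite local invariants vanish. You defer this to an unexecuted symbol computation and yourself flag it as the expected obstacle, so as it stands the theorem is not proved. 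Worse, the ``remaining freedom'' you invoke is partly illusory: at a finite place $v$ where $D$ ramifies, a skew-Hermitian form of rank $6$ over the local quaternion division algebra with trivial discriminant is unique up to isometry, so which of the two (globally defined) components $C^{\pm}$ is split at $v$ is forced by the global data, not chosen; and changing the Hasse invariant of $h$ at a place where $D$ splits makes \emph{both} components nonsplit there (since their classes agree at such places, $[C^+]+[C^-]=[A]$ being locally trivial), so it cannot repair a mismatch at the ramified places. The only genuine levers are the choice of the finite ramification set of $D$ and the reciprocity constraint in the existence theorem for $h$, and whether these suffice requires precisely the explicit formula for the Clifford algebra of a quaternionic skew-Hermitian form that you postpone. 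Parity bookkeeping (each of $[C^\pm]$ has an even number of nonsplit places, their ``nonsplit sets'' partition $\mathrm{Ram}(D)$ once the split places are cleaned up) shows there is no obvious reciprocity obstruction, but that is consistency, not existence.

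For contrast, the paper avoids local--global arguments entirely: it realizes the form of $\SO^*(12)$ as $\SO(W,b)\cap \SU(W,\psi)$ for an explicit pair of forms on a vector space over an imaginary quadratic extension $E$ of $E_0$, and proves rationality of $S^+$ by exhibiting a conjugate-linear $G_1$-equivariant operator $L_{\pm}$ on $\Res_{E/E_0}S^{\pm}$ (built from a Hodge $\star$-operator), computing $(L_\pm)^2=(-1)^mD\Id$ or $(-1)^mD\lambda\Id$, and checking that for the chosen $\psi_{\delta,1}$ the relevant scalar is a square, hence a norm from $E$ (Theorem~\ref{allcases}, Corollaries~\ref{criterion} and~\ref{usedcor}). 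That argument is purely algebraic, works uniformly, and in your language amounts to an explicit splitting of $C^+$; if you want to complete your route, you would need either to import such an explicit computation of $[C^{\pm}]$ in terms of $D$ and a diagonalization of $h$ (e.g.\ from the literature on Clifford algebras of algebras with involution) or to redo it, at which point the local--global machinery becomes largely superfluous.
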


As in \cite{FL}, once we have the group $G_1$ and the representation $V_1$, we take $G=\Res_{E_0/\bQ} G_1$ together with the induced representation on $V=\Res_{E_0/\bQ} V_1$. Then $G_\R = \prod_{i=1}^dG_{i, \R}$,  and   $V\otimes_{\Q}\R = \bigoplus _{i=1}^d V_{i, \R}$ where $V_{i,\R}$ is the real vector space $V_1\otimes _{E_0, \sigma_i}\R$.
The associated $\bQ$-VHS $\calV$ will have real multiplication by $E_0$ and will split over $\bR$  into a CY piece $\calV_1$ and a product of Tate twists of weight $1$ factors coming from the Kuga-Satake construction applied to $\SO(2, 10)$, say  $\calV_2, \dots, \calV_d$. 
The Mumford-Tate domain is $\calD_1\times \calD_2\times \cdots \times \calD_d$, a product of  different Hermitian symmetric domains, with $\calD_i$ parametrizing the Hermitian VHS $\calV_i$, as desired.

Most of the VHS of CY type occurring in geometry are not of Hermitian type. Still, the Hermitian VHS are interesting, as they provide simple test cases for mirror symmetry. For example, we mention the examples of VHS of CY threefolds without maximal monodromy due to Rohde \cite{rohde3}, which  are both geometric and of Hermitian type. Thus, an interesting question is whether the (abstract) VHS of CY type discussed here can be realized geometrically or motivically. The case $\mathrm{III}_3$ is   the case of abelian threefolds. Recently, Zheng Zhang has shown that the case  $\mathrm{I}_{3,3}$, including the case of non-trivial real multiplication, can be realized motivically starting from a VHS of abelian variety type. At the other extreme, Deligne showed in \cite{dshimura} that the case $\mathrm{EVII}$ cannot be realized motivically starting from a VHS of abelian variety type, and it is a well known open problem to give some motivic realization for it. It is possible that the  $\mathrm {II}_6$ case has a motivic realization starting with a VHS of abelian varieties, but Zheng Zhang has also noted that such a construction cannot work in the presence of non-trivial real multiplication. 

The outline of this paper is as follows. In Section 1, we discuss some general forms of the group $\SO^*(2n)$ defined over totally real  fields. As a warm up to the proof of Theorem~\ref{mainthm2}, we give a necessary and sufficient condition for  the rationality of the standard representation. The proof of Theorem~\ref{mainthm2} then consists of two steps. In the first step, in Section 2, given an imaginary quadratic extension $E$ of $E_0$, we construct the group $G_1$ and a representation of $G_1$ defined over $E$ whose tensor product with $\Cee$   is a half spin representation. Section 3 describes the second and more difficult step:  We prove that the representation of the group $G_1$ is actually defined over $E_0$, as part of a more general discussion of the rationality of the half spin representations for the groups described in Sections 1 and 2. We note that, over $\bR$, the fact that the relevant half spin representation is defined over $\bR$ (as opposed to only over $\bC$)  follows  from the general criterion of \cite[Theorem IV.E.4]{mtbook}.

\section{Preliminaries}

Throughout this paper, $E_0$ will either be a totally real number field or $\R$, and $E$ will   be an imaginary quadratic extension of $E_0$, hence $E=\Cee$ in case $E_0 =\R$. If $\sigma\in \Gal(E/E_0)$ is the nontrivial element, we denote $\sigma(\alpha)$ by $\bar{\alpha}$. If $V_1$, $V_2$ are two $E$-vector spaces, an additive homomorphism $f\colon V_1\to V_2$ is \textsl{conjugate linear} if, for all $v\in V_1$ and $\alpha \in E$, $f(\alpha v) = \bar{\alpha}f(v)$. In particular, $f$ is $E_0$-linear. Our goal in this section will be to describe the linear algebra necessary to construct a form of the real group $\SO^*(2n)$ over $E_0$ (as defined in \cite{Knapp}), and also to discuss the rationality of the standard representation. Let $W$ be an $E$-vector space of dimension $2n$ with $E$-basis $e_1, \dots, e_{2n}$. We write $z = \sum_{i=1}^{2n}z_ie_i$, and similarly for $w$. Suppose that $b(z,w)$ is a nondegenerate $E$-bilinear form on $W$, written in the standard form 
$$b(z,w) = \sum_{i=1}^n (z_iw_{n+i}+ z_{n+i}w_i),$$
in other words $b(e_i, e_j) = 0$ if $i,j\leq n$ or $i,j\geq n+1$, and $b(e_i, e_{n+j}) = \delta_{ij}$. Let $\psi$ be  an $(E, E_0)$-Hermitian form on $W$. For the moment, we will just make the assumption that $\psi$ is diagonalized in the basis $e_1, \dots, e_{2n}$ and write
$$\psi(z,w) = \sum_{i=1}^{2n}a_iz_i\bar{w}_i,$$
where necessarily the $a_i \in E_0$.
Then $b$ defines an $E$-linear isomorphism $W\to W\spcheck$,   denoted by $B$, via the rule $B(v)(w) = b(v,w)$, and $\psi$ defines  a conjugate linear isomorphism $W\to W\spcheck$,   denoted by $\Psi$, via the rule $\Psi(v)(w) = \psi(w,v)$. Finally define
$$J =B^{-1}\circ \Psi.$$

\begin{lemma} In the above notation, $J$ is specified by the properties that $J$ is conjugate linear, and, for all $i\leq n$,
\begin{align*}
J(e_i) & = a_ie_{n+i};\\
J(e_{n+i}) & = a_{n+i}e_i.
\end{align*}
\end{lemma}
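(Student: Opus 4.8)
The plan is to unwind the three definitions directly; no clever idea is needed. First I would settle the conjugate linearity of $J$: the form $b$ is $E$-bilinear, so $B$ and hence $B^{-1}$ are $E$-linear, whereas $\Psi$ is conjugate linear because $\psi$ is conjugate linear in its \emph{second} argument, which is exactly the slot that $\Psi$ acts on (for fixed $v$, the map $w\mapsto\psi(w,v)$ is $E$-linear, so $\Psi(v)\in W\spcheck$; and $v\mapsto\Psi(v)$ is conjugate linear). Composing, $J=B^{-1}\circ\Psi$ is conjugate linear. Since a conjugate linear map is determined by its values on the $E$-basis $e_1,\dots,e_{2n}$, it then suffices to compute $J(e_i)$ and $J(e_{n+i})$ for $i\le n$; together with conjugate linearity these values "specify" $J$, as claimed.

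Next I would use the defining relation $B(J(v))=\Psi(v)$, i.e.\ $b(J(v),w)=\psi(w,v)$ for all $w\in W$. Take $v=e_i$ with $i\le n$ and write $w=\sum_k w_ke_k$. The right-hand side is $\psi(w,e_i)=\sum_k a_k w_k\overline{(e_i)_k}=a_i w_i$, since the coordinates of $e_i$ are $0$ or $1$ (and the $a_k\in E_0$). For the left-hand side, write $J(e_i)=\sum_k c_ke_k$; the explicit form of $b$ gives $b(J(e_i),w)=\sum_{k=1}^n(c_kw_{n+k}+c_{n+k}w_k)$, so that the coefficient of $w_j$ is $c_{n+j}$ for $j\le n$ and $c_{j-n}$ for $j>n$. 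Matching these against $a_iw_i$ forces $c_{n+i}=a_i$ and all other $c_k=0$, i.e.\ $J(e_i)=a_ie_{n+i}$. The case $v=e_{n+i}$ is identical: $\psi(w,e_{n+i})=a_{n+i}w_{n+i}$, the coefficient of $w_{n+i}$ in $b(J(e_{n+i}),w)$ is the coordinate of $J(e_{n+i})$ along $e_i$, and one reads off $J(e_{n+i})=a_{n+i}e_i$.

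There is essentially no obstacle; this is a bookkeeping computation. The only points demanding a little attention are tracking which argument of $\psi$ carries the conjugate linearity (so that $J$ comes out conjugate linear rather than $E$-linear) and recalling that $a_i\in E_0$ (so that $\psi$ is genuinely Hermitian and $\overline{a_i}=a_i$); neither changes the final formulas but both are needed for the statement to be well posed. I expect the whole argument to run to only a few lines.
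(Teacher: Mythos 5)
Your proof is correct and is essentially the paper's argument: the paper computes $B(e_i)=e_{n+i}^*$, $B(e_{n+i})=e_i^*$, $\Psi(e_i)=a_ie_i^*$ on the basis and composes, which is the same coefficient bookkeeping you carry out via the relation $b(J(v),w)=\psi(w,v)$. Your explicit check of conjugate linearity is a point the paper leaves implicit, but there is no substantive difference in method.
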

\begin{proof} If $e_i^*$ are the dual basis vectors in $W\spcheck$ (i.e.\ $e_i^*(e_j) = \delta_{ij}$), then clearly, for $i\leq n$, $B(e_i) = e_{n+i}^*$ and $B(e_{n+i}) = e_i^*$. Moreover $\Psi(e_i) = a_ie_i^*$ for all $i$. The proof is then immediate.
\end{proof}

\begin{corollary} In the above notation, $J^2$ is specified by the properties that $J$ is $E$-linear, and that, for all $i\leq n$,
\begin{align*}
J^2(e_i) & = a_ia_{n+i}e_i;\\
J^2(e_{n+i}) & = a_ia_{n+i}e_{n+i}. \qed
\end{align*}
\end{corollary}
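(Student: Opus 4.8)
The plan is to derive this statement directly from the preceding Lemma by composing $J$ with itself. First I would record that $J$ is conjugate linear: the map $B$ is $E$-linear, since it is induced by the $E$-bilinear form $b$, whereas $\Psi$ is conjugate linear, since it is induced by the Hermitian form $\psi$; hence $J = B^{-1}\circ \Psi$ is conjugate linear (this is already part of the Lemma's statement), and therefore $J^2$, a composite of two conjugate linear maps, is $E$-linear. This establishes the first of the two asserted properties and reduces the determination of $J^2$ to evaluating it on the basis $e_1,\dots,e_{2n}$.

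For the values on basis vectors, fix $i\leq n$ and apply the Lemma twice, using the conjugate linearity of $J$ at the second step: $J^2(e_i) = J(a_ie_{n+i}) = \bar{a_i}\,J(e_{n+i}) = \bar{a_i}a_{n+i}e_i$. Since $a_i \in E_0$, it is fixed by the nontrivial element of $\Gal(E/E_0)$, so $\bar{a_i} = a_i$ and thus $J^2(e_i) = a_ia_{n+i}e_i$. The computation of $J^2(e_{n+i})$ is entirely symmetric: $J^2(e_{n+i}) = J(a_{n+i}e_i) = \bar{a_{n+i}}\,J(e_i) = a_ia_{n+i}e_{n+i}$. Together with the $E$-linearity of $J^2$, this gives the claimed description.

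There is essentially no obstacle here; the one point to be careful about is tracking the conjugation when pulling the scalar $a_i$ through $J$, which is exactly where the hypothesis $a_i \in E_0$ is used, so that conjugation acts trivially on these scalars. One could instead argue intrinsically, observing that $J^2 = B^{-1}\Psi B^{-1}\Psi$ is the endomorphism of $W$ attached to the composite of $\psi$ with itself via $b$, but the basis computation above is the most transparent route and matches the form in which the Lemma is stated.
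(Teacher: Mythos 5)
Your proof is correct and is precisely the computation the paper leaves implicit (the corollary is stated with no separate proof, being immediate from the preceding Lemma): apply $J$ twice on basis vectors, use conjugate linearity to pull out $\bar{a_i}$, and note $a_i\in E_0$ so conjugation is trivial. Nothing further is needed.
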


In particular, we see that $J^2 = \lambda \Id$ for some $\lambda \in E_0$ $\iff$ $a_ia_{n+i} = \lambda$ for all $i\leq n$, i.e.\ there exists a $\lambda\in E_0$ such that, for $i\leq n$, $a_{n+i} = \lambda a_i^{-1}$. In terms of the forms, this says:

\begin{lemma} There exists a $\lambda \in E_0$ such  that $J^2 = \lambda \Id$ $\iff$ $\lambda (\Psi^{-1}\circ B) = B^{-1}\circ \Psi$. 
\end{lemma}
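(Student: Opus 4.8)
The plan is to argue directly from the definition $J = B^{-1}\circ\Psi$, viewing the identity $J^2=\lambda\,\Id$ and the identity $\lambda(\Psi^{-1}\circ B) = B^{-1}\circ\Psi$ as two rearrangements of one another. The only bookkeeping that requires attention is $E$-linearity versus conjugate linearity: $B$ and $B^{-1}$ are $E$-linear while $\Psi$ and $\Psi^{-1}$ are conjugate linear, so $J$ and $\Psi^{-1}\circ B$ are conjugate linear and $J^2$ is $E$-linear. I would first record the elementary fact that, since $\lambda\in E_0$ is fixed by the conjugation $\sigma$, scalar multiplication by $\lambda$ commutes with each of the maps $B, B^{-1}, \Psi, \Psi^{-1}$ (whether $E$-linear or conjugate linear), so $\lambda$ may be moved freely through any composition without introducing a conjugate.

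For the implication ($\Leftarrow$), assume $\lambda(\Psi^{-1}\circ B) = B^{-1}\circ\Psi = J$ and compute
$$J^2 = (B^{-1}\circ\Psi)\circ\bigl(\lambda\,\Psi^{-1}\circ B\bigr) = \lambda\,\bigl(B^{-1}\circ\Psi\circ\Psi^{-1}\circ B\bigr) = \lambda\,(B^{-1}\circ B) = \lambda\,\Id,$$
using $\Psi\circ\Psi^{-1} = \Id_{W\spcheck}$. For ($\Rightarrow$), assume $J^2 = \lambda\,\Id$, i.e.\ $B^{-1}\circ\Psi\circ B^{-1}\circ\Psi = \lambda\,\Id_W$, and compose on the left with the isomorphism $B$ and then with the isomorphism $\Psi^{-1}$, moving $\lambda$ through; this yields $B^{-1}\circ\Psi = \lambda\,(\Psi^{-1}\circ B)$, which is exactly the asserted identity.

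There is no real obstacle here: once the conjugate-linearity conventions are in place this is a one-line manipulation, and the write-up is just a matter of stating it cleanly. If one prefers, the equivalence can instead be read off from the preceding Corollary — $J^2=\lambda\,\Id$ holds iff $a_ia_{n+i}=\lambda$ for all $i\le n$ — by checking on the basis $e_1,\dots,e_{2n}$, using the formula for $J$ together with $B(e_i)=e_{n+i}^*$ and $\Psi^{-1}(e_i^*)=a_i^{-1}e_i$, that $\lambda(\Psi^{-1}\circ B)$ and $B^{-1}\circ\Psi$ agree precisely under that condition. The coordinate-free version above is shorter, and has the side benefit of making clear that the diagonalizability hypothesis on $\psi$ is not needed for this particular statement.
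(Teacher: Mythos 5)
Your proof is correct and is essentially the paper's argument: the paper's proof is the same one-line rearrangement, phrased as $J^2=\lambda\,\Id \iff J=\lambda J^{-1} \iff B^{-1}\circ\Psi = \lambda(\Psi^{-1}\circ B)$, using $J^{-1}=\Psi^{-1}\circ B$; your care with conjugate linearity and $\bar\lambda=\lambda$ for $\lambda\in E_0$ is exactly the right bookkeeping, and (like the paper's proof) no diagonalization of $\psi$ is actually used.
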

\begin{proof} $J^2 = \lambda \Id$ $\iff$ $J = \lambda J^{-1}$ $\iff$ $B^{-1}\circ \Psi = \lambda (\Psi^{-1}\circ B)$. 
\end{proof}

Let $\Phi\colon W \to W'$ be an $E$-linear isomorphism. Then $\Phi$ defines a quadratic form $b_\Phi$ on $W'$ via: $b_\Phi(\xi, \eta) = b(\Phi^{-1}(\xi), \Phi^{-1}(\eta))$. Equivalently, $b(v,w) = b_{\Phi}(\Phi(v), \Phi(w))$, so that $\Phi\colon (W,b) \to (W', b_{\Phi})$ is an isomorphism of quadratic spaces. For example, $B$ defines a form on $W\spcheck$, the dual quadratic form, which we just denote by $b\spcheck$. A  conjugate linear isomorphism $\Psi \colon W \to W'$ defines an $E$-bilinear form as well, which we denote by $\bar{b}_\Psi$, via the rule
$$\bar{b}_\Psi(\xi, \eta) = \overline{b(\Psi^{-1}(\xi), \Psi^{-1}(\eta))}.$$

Direct calculation then shows:

\begin{lemma}\label{lemlambda} With $b, \psi, B, \Psi$ as above, there exists a $\lambda\in E_0$ such that 
$\bar{b}_\Psi = \lambda^{-1} b\spcheck$ $\iff$ $a_ia_{n+i} = \lambda$ for all $i\leq n$. \qed
\end{lemma}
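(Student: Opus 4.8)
The plan is to compute the Gram matrices of the two $E$-bilinear forms $\bar b_\Psi$ and $b\spcheck$ on $W\spcheck$ in the dual basis $e_1^*,\dots,e_{2n}^*$ and compare them: a bilinear form is determined by its Gram matrix, so the assertion $\bar b_\Psi=\lambda^{-1}b\spcheck$ is equivalent to the corresponding identity of Gram matrices.

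First I would record, from the proof of the Lemma giving the formula for $J$, that $B(e_i)=e_{n+i}^*$ and $B(e_{n+i})=e_i^*$ for $i\le n$, while $\Psi(e_k)=a_ke_k^*$ for all $k$. Inverting these---using that $\Psi^{-1}$ is conjugate linear and that each $a_k$ lies in $E_0$, so that conjugation fixes it---gives $B^{-1}(e_i^*)=e_{n+i}$, $B^{-1}(e_{n+i}^*)=e_i$ for $i\le n$, and $\Psi^{-1}(e_k^*)=a_k^{-1}e_k$ for all $k$. Plugging into $b\spcheck(e_i^*,e_j^*)=b(B^{-1}e_i^*,B^{-1}e_j^*)$ and $\bar b_\Psi(e_i^*,e_j^*)=\overline{b(\Psi^{-1}e_i^*,\Psi^{-1}e_j^*)}$, and using that $b(e_i,e_j)=1$ when $\{i,j\}=\{k,n+k\}$ for some $k\le n$ and $b(e_i,e_j)=0$ otherwise, I find that $b\spcheck$ has Gram matrix $\left(\begin{smallmatrix}0&I_n\\ I_n&0\end{smallmatrix}\right)$, whereas $\bar b_\Psi$ has Gram matrix $\left(\begin{smallmatrix}0&D\\ D&0\end{smallmatrix}\right)$ with $D=\operatorname{diag}((a_1a_{n+1})^{-1},\dots,(a_na_{2n})^{-1})$ (the outer complex conjugation in the definition of $\bar b_\Psi$ being vacuous here, as these entries lie in $E_0$). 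Comparing the two, $\bar b_\Psi=\lambda^{-1}b\spcheck$ holds if and only if $D=\lambda^{-1}I_n$, i.e.\ if and only if $a_ia_{n+i}=\lambda$ for every $i\le n$.

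This is a routine linear-algebra computation and I do not anticipate any real obstacle; the only points needing care are the conjugate-linearity of $\Psi$ (hence of $\Psi^{-1}$) and the fact that the scalars $a_k$ lie in the fixed field $E_0$, which together make harmless the complex conjugations that appear when one inverts $\Psi$ or conjugates products of the $a_k$. As a consistency check, the displayed equivalence is exactly the ``forms'' reformulation of the operator identity $\lambda(\Psi^{-1}\circ B)=B^{-1}\circ\Psi$, which by the preceding Lemma and the Corollary is in turn equivalent to $J^2=\lambda\,\Id$ and hence again to $a_ia_{n+i}=\lambda$ for all $i\le n$; one could also simply deduce Lemma~\ref{lemlambda} from those two statements once the dictionary between the form identity and the operator identity is established.
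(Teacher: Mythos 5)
Your proposal is correct and is essentially the paper's argument: the paper states the lemma with ``Direct calculation then shows'' and leaves the verification to the reader, and your Gram-matrix computation in the dual basis (together with the observation that the $a_k\in E_0$ make the conjugations harmless) is exactly that direct calculation, yielding $a_ia_{n+i}=\lambda$ for all $i\le n$ as required.
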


\begin{definition}\label{goodcomp} 1) A pair of forms $b$, $\psi$ satisfying either of the equivalent conditions of the previous lemma will be called \textsl{compatible}.

\smallskip

\noindent 2) With $b, \psi$ compatible as above, let $e_1, \dots, e_{2n}$ be an $E$-basis of $W$ such that $b(z,w) = \sum_{i=1}^n (z_iw_{n+i}+ z_{n+i}w_i)$ and $\psi(z,w) = \sum_{i=1}^{2n}a_iz_i\bar{w}_i$, where as before we write $z=\sum_iz_ie_i$ and $w=\sum_iw_ie_i$. We call $e_1, \dots, e_{2n}$ a \textsl{good basis} if there exists a $\lambda \in E_0$ such that $a_ia_{n+i} = \lambda$ for all $i$, $1\leq i \leq n$. If $e_1, \dots, e_{2n}$ is a good basis, we call the maximal $b$-isotropic subspace $W_1$ which is the span over $E$ of $e_1, \dots, e_n$ a \textsl{good isotropic subspace} of $W$. 

\smallskip

\noindent 3) For a compatible $b, \psi$, a good basis $e_1, \dots, e_{2n}$, and a good isotropic subspace $W_1$ of $W$, we denote by $D= \det(\psi|W_1)$ the discriminant $a_1\cdots a_n$ of the Hermitian form $\psi|W_1$ with respect to the basis $e_1, \dots, e_n$ of $W_1$. (More invariantly, $\det(\psi|W_1)$ is only well-defined up to a norm, i.e.\  as an element of $E_0^*/\Nm_{E/E_0}(E^*)$.)
\end{definition}

For an $E$-vector space $W$, we write $\Res_{E/E_0}W$ for the Weil restriction of scalars of $W$: $\Res_{E/E_0}W$ is just $W$ considered as an $E_0$-vector space. For an algebraic group $G$ defined over $E$, the restriction of scalars $\Res_{E/E_0}G$ is similarly defined (see also \cite[\S I.4.b]{milneala}), and is an algebraic group over $E_0$.

Let $G_0 = G(W, b, \psi)$ be the set of $E$-linear isomorphisms of $W$ preserving $b$ and $\psi$. The group $G_0$ is the group of $E_0$-valued points of an affine algebraic group over $E_0$, also denoted by $G_0$: It is the intersection of the algebraic group $\Res_{E/E_0}\SO(W,b)$, where $\SO(W,b)$ is the special orthogonal group of the form $b$, with the special unitary group $\SU(W, \psi)$ of the Hermitian form $\psi$, which is also an algebraic group defined over $E_0$. The operator $J$ commutes with the $G_0$-action on $W$.  A straightforward argument shows:

\begin{proposition}\label{standardrep} The algebra $\End_{E_0[G_0]}\Res_{E/E_0}W$ is equal to $E[J]$. Moreover, the representation $W$ of $G_0$ can not be defined over $E_0$ $\iff$ $E[J]$ is a division algebra $\iff$ $\lambda$ is not of the form $\Nm_{E/E_0}(c)$ for some $c\in E$. Thus,  the representation $W$ of $G_0$ can be defined over $E_0$ $\iff$ $E[J]$ is isomorphic to a matrix  algebra over $E_0$  $\iff$ $\lambda=\Nm_{E/E_0}(c)$ for some $c\in E$. 
\end{proposition}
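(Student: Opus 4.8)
The plan is to compute the commutant algebra $\End_{E_0[G_0]}\Res_{E/E_0}W$ directly and then translate the resulting structural dichotomy into the stated arithmetic criterion on $\lambda$. First I would observe that $\Res_{E/E_0}W$ carries two commuting pieces of structure preserved by $G_0$: the $E$-action coming from scalar multiplication on $W$ (which realizes $E$ inside $\End_{E_0}(\Res_{E/E_0}W)$), and the conjugate-linear operator $J = B^{-1}\circ\Psi$, which commutes with $G_0$ because both $B$ and $\Psi$ are $G_0$-equivariant (here using that $G_0$ preserves $b$ and $\psi$ by definition). Since $J$ is conjugate linear, it does not commute with the $E$-action but rather satisfies $J\alpha = \bar\alpha J$ for $\alpha\in E$; together with $J^2 = \lambda\,\Id$ (from the Corollary, using that $b,\psi$ are compatible), this shows that the $E_0$-subalgebra of $\End_{E_0}(\Res_{E/E_0}W)$ generated by $E$ and $J$ is a quotient of the quaternion-type algebra $E_0\langle i, j\rangle$ with $i^2 = $ (the generator of $E$ over $E_0$), $j^2 = \lambda$, $ij = -ji$; concretely this algebra is $E[J] = E\oplus EJ$, of dimension $4$ over $E_0$. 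The containment $E[J]\subseteq \End_{E_0[G_0]}\Res_{E/E_0}W$ is then clear.

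Next I would prove the reverse containment, i.e.\ that $G_0$ acting on $W$ (as an $E$-space) is "as irreducible as possible": the $E[G_0]$-module $W$ is absolutely irreducible, or at least has $E$-endomorphism algebra exactly $E$. Since $G_{0,\bar E}$ contains $\SO(W\otimes\bar E, b)$ — indeed $G_0$ is, by construction, the intersection $\Res_{E/E_0}\SO(W,b)\cap \SU(W,\psi)$, and after base change to $\bar E$ the unitary condition becomes a second copy of an orthogonal-type condition that cuts things down to the (special) orthogonal group in the standard representation — the standard representation $W$ of $\SO(W,b)$ is absolutely irreducible (for $2n\geq 3$), so $\End_{E[G_0]}W = E$. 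A Galois-descent/Schur bookkeeping argument then gives $\End_{E_0[G_0]}\Res_{E/E_0}W = \{x\in\End_{E_0}(\Res_{E/E_0}W): x \text{ commutes with }G_0\}$; decomposing such $x$ according to the $\pm 1$ eigenspace decomposition under the conjugation induced by $J$ (equivalently, writing $x = x_E + x_J$ with $x_E$ $E$-linear and $x_J$ conjugate linear), each piece lands in $E$ respectively $EJ$. Hence $\End_{E_0[G_0]}\Res_{E/E_0}W = E[J]$, proving the first assertion.

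Finally I would identify when $E[J]$ is split. The algebra $E[J]$ is a quaternion algebra over $E_0$: it is $\bigl(\tfrac{\delta,\lambda}{E_0}\bigr)$ where $E = E_0(\sqrt\delta)$. Such an algebra contains $E$ as a (split-by-it) maximal subfield, so by the standard theory of quaternion algebras $E[J]$ is split (isomorphic to $M_2(E_0)$) iff $\lambda$ is a norm from $E = E_0(\sqrt\delta)$, i.e.\ $\lambda = \Nm_{E/E_0}(c)$ for some $c\in E$; otherwise it is a division algebra. This is precisely the computation that $J^2 = \Nm_{E/E_0}(c) = c\bar c$ means $(c^{-1}J)$ is a conjugate-linear involution whose $E_0$-fixed locus is an $E_0$-form of $W$ on which $G_0$ acts, giving the descent of the representation; conversely if $W$ descends to an $E_0$-representation $W_0$ then $\End_{E_0[G_0]}(W_0\otimes_{E_0} E) \supseteq M_2(E_0)$ forces $E[J]$ to be split. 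The main obstacle is the reverse containment in the commutant computation — i.e.\ carefully checking that no "extra" $E_0$-linear endomorphisms commuting with $G_0$ appear beyond $E[J]$ — which rests on the absolute irreducibility of the standard representation of $\SO(W,b)$ together with a clean treatment of the conjugate-linear half of the commutant; everything else is formal manipulation of quaternion algebras. Since the authors signal this by "a straightforward argument shows", I would keep this step brief, citing the absolute irreducibility of the standard representation and Schur's lemma over $\bar E$.
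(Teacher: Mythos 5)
Your proposal is correct, and it reaches the same two pillars as the paper --- Schur's lemma for the (absolutely irreducible) standard representation, giving $\End_{E[G_0]}W = E$, and the norm criterion for the quaternion algebra $E[J]$ --- but it organizes the commutant computation differently. The paper base-changes: it writes $\Res_{E/E_0}W\otimes_{E_0}E = W\oplus\overline{W}$, uses $b$ and $\psi$ to identify $\overline{W}\cong W$ as $E[G_0]$-modules, concludes $\End_{E[G_0]}(\Res_{E/E_0}W\otimes_{E_0}E)\cong\mathbb{M}_2(E)$, and then gets $\End_{E_0[G_0]}\Res_{E/E_0}W = E[J]$ by a dimension count ($4$ over $E_0$, with $E[J]$ a $4$-dimensional subalgebra). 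You instead stay over $E_0$ and split an arbitrary commuting $E_0$-endomorphism into its $E$-linear and conjugate-linear parts, placing the first in $E$ and the second in $EJ$ (by composing with $J^{-1}$); this is equivalent in content and perhaps more self-contained, though you should phrase the irreducibility input as: $G_0$ is an $E_0$-form of $\SO_{2n}$ whose image in $\GL(W\otimes\bar{E})$ is $\SO(W\otimes\bar E,b)$ (saying $G_{0,\bar E}$ ``contains'' $\SO(W\otimes\bar E,b)$ is loose, since $G_0$ is itself a form of that group), and note the implicit hypothesis $2n\geq 3$. For the splitting criterion the paper does the norm-form computation $(\alpha+\beta J)(\bar\alpha-\beta J)=\alpha\bar\alpha-\lambda\beta\bar\beta$ by hand, where you invoke the standard classification of the cyclic/quaternion algebra $(E/E_0,\sigma,\lambda)$ --- same content; and your explicit descent via the conjugate-linear involution $c^{-1}J$ when $\lambda=c\bar c$ is a welcome sharpening of the paper's more implicit ``matrix algebra iff $\Res_{E/E_0}W$ reducible'' step. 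One cosmetic caution: you write $E=E_0(\sqrt\delta)$, but $\delta$ is reserved in Section 2 for the Hermitian form parameter, so choose a different letter if this is to be spliced in.
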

\begin{proof} As an $E[G_0]$-module, $\Res_{E/E_0}W\otimes_{E_0}E = W \oplus \overline{W}$, where $\overline{W}$ is the conjugate vector space. The form $\psi$ defines an isomorphism from $W$ to $\overline{W}\spcheck$ and $b$ defines an isomorphism from $W$ to $W\spcheck$, hence $\psi$ and $b$ together define an isomorphism from $\overline{W}$ to $W$ as $E[G]$-modules. Thus, $\End_{E[G_0]}(\Res_{E/E_0}W \otimes _{E_0}E) \cong \mathbb{M}_2(E)$. It follows that $\dim_{E_0}\End_{E_0[G_0]}\Res_{E/E_0}W=4$, hence  is equal to $E[J]$ (note that $J \notin E$ since $J$ is conjugate linear). Also, $\End_{E_0[G_0]}\Res_{E/E_0}W$ is a matrix algebra if $\Res_{E/E_0}W$ is reducible and is a division algebra if $\Res_{E/E_0}W$ is irreducible. An argument as in  (3.22) of \cite{FL} shows  that $E[J]$ is a division algebra $\iff$ $\lambda$ is not a norm: every element of $E[J]$ can be uniquely written as $\alpha + \beta\cdot J$,    $\alpha, \beta\in E$. Using 
$$(\alpha + \beta\cdot J)(\bar{\alpha} -  \beta\cdot J) = \alpha\bar{\alpha} - \lambda\beta\bar{\beta},$$
we see that a nonzero $\alpha + \beta\cdot J$ is always invertible $\iff$ $\lambda$ is not a norm. Thus $E[J]$ is a division algebra $\iff$ $\lambda$ is not a norm, and hence is is isomorphic to a matrix  algebra over $E_0$  $\iff$ $\lambda$ is a norm.  
\end{proof}

\begin{remark} The proof of Proposition~\ref{standardrep} doesn't require that we know that $\overline{W}\cong W$ as $E[G]$-modules. In fact, if this were not the case, then we would have $\End_{E [G_0]}(\Res_{E/E_0}W\otimes _{E_0}E) \cong E\oplus E$, which is commutative of dimension $2$, but  in our situation $\dim_{E_0}(\End_{E_0[G_0]}\Res_{E/E_0}W) \geq 4$.
\end{remark}

In Section 3, we will need to know the Lie algebra $\mathfrak{g}_0$ of $G_0$ (as an $E_0$-vector space). As is well-known, the Lie algebra $\mathfrak{so}(W,b)$ is identified with $\bigwedge^2W$, by identifying $v\wedge w$ with the linear map $x\mapsto b(v,x)w - b(w,x)v$. In particular, a basis for $\mathfrak{so}(W,b)$ is given by $X_{rs}$, the linear map of $W$ corresponding to $e_r\wedge e_s$, $r< s$. Hence a basis is given by $X_{ij}$ and $X_{n+i, n+j}$ for $1\leq i < j\leq n$ as well as $X_{i, n+j}$ for $i,j \leq n$. The condition that a $T\in \mathbb{M}_{2n}(E_0)$ preserves the hermitian form $\psi$ is just the condition that $\psi(Tz, w) =- \psi(z, Tw) = -\overline{\psi(Tw, z)}$ for all $z, w\in W$. In terms of the basis $e_1, \dots, e_{2n}$ in which $\psi$ is diagonalized, these conditions read:   $T$  preserves $\psi$ $\iff$ $\psi(Te_r, e_s) = -\overline{\psi(Te_s, e_r)}$ for all $1\leq r,s \leq 2n$. Then a tedious calculation gives:

\begin{lemma}\label{Liealg} Suppose that $b$  and $\psi$ are compatible, and let $\alpha \in E_0$ be such that $\bar{\alpha} = -\alpha$. Then an $E_0$-basis for $\mathfrak{g}_0$ is given by:
\begin{align*}
a_{n+i}X_{ij} + a_jX_{n+i, n+j}, \qquad &1\leq i < j \leq n;\\
\alpha(a_{n+i}X_{ij} - a_jX_{n+i, n+j}), \qquad &1\leq i < j \leq n;\\
a_{n+i}X_{i, n+j} - a_{n+j}X_{j, n+i}, \qquad &1\leq i<j\leq n;\\
\alpha(a_{n+i}X_{i, n+j} + a_{n+j}X_{j, n+i}), \qquad &1\leq i<j\leq n;\\
\alpha X_{i,n+i}, \qquad &1\leq i  \leq n. \qquad \qed
\end{align*}
\end{lemma}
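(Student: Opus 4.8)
The plan is to compute the Lie algebra $\mathfrak{g}_0$ directly, as a subalgebra of $\mathfrak{so}(W,b)\otimes_{E_0}E$ cut out by the condition of preserving $\psi$, and then descend back to an $E_0$-structure using the observation that $\mathfrak{g}_0$ consists exactly of those elements of $\mathfrak{gl}(W)$ fixed by an involution built from $\psi$.

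First I would set up coordinates: since $\mathfrak{so}(W,b)\otimes_{E_0}E = \mathfrak{so}(W\otimes E, b)$ has $E$-basis $\{X_{ij}, X_{n+i,n+j} : i<j\le n\} \cup \{X_{i,n+j} : i,j\le n\}$ (where $X_{i,n+i}$ for each $i$ is included), I would work out the action of each $X_{rs}$ on the basis vectors $e_t$ from the formula $v\wedge w \mapsto (x\mapsto b(v,x)w - b(w,x)v)$, using $b(e_i,e_{n+j})=\delta_{ij}$. Then I impose the condition $\psi(Te_r,e_s) = -\overline{\psi(Te_s,e_r)}$ for all $r,s$; with $\psi(z,w)=\sum a_i z_i\bar w_i$ diagonal, this becomes a system of linear equations on the coefficients of $T = \sum c_{rs}X_{rs}$ ($c_{rs}\in E$). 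The point is that the condition couples $X_{ij}$ with $X_{n+i,n+j}$ (via $e_i$ and $e_{n+j}$ being $b$-paired), couples $X_{i,n+j}$ with $X_{j,n+i}$, and constrains $X_{i,n+i}$ alone. For the pair $(X_{ij}, X_{n+i,n+j})$ one finds that $c\, X_{ij} + c'\, X_{n+i,n+j}$ lies in $\mathfrak g_0$ iff $\overline{c}\,a_{n+i}^{-1}\cdot(\text{something}) $ relates to $c'$; solving shows the real $(2$-dimensional over $E_0)$ solution space is spanned by $a_{n+i}X_{ij}+a_jX_{n+i,n+j}$ and its multiple by $\alpha$ with the relative sign flipped, exactly as listed. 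The compatibility hypothesis $a_ia_{n+i}=\lambda\in E_0$ is what makes the two generators for each index pair close up into an $E_0$-form rather than only an $E$-span; concretely it is used to check that the $\psi$-condition, after clearing the $a_i$'s, becomes symmetric in a way that an $E_0$-linear combination (not just an $E$-linear one) can satisfy.

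Carrying this out pair by pair: the off-diagonal block generators $X_{i,n+j}$ and $X_{j,n+i}$ for $i<j$ give the third and fourth families $a_{n+i}X_{i,n+j}-a_{n+j}X_{j,n+i}$ and $\alpha(a_{n+i}X_{i,n+j}+a_{n+j}X_{j,n+i})$ by the same bookkeeping; here one uses $b$-antisymmetry ($X_{i,n+j}$ and $X_{j,n+i}$ being related by the $b$-pairing of $e_i\leftrightarrow e_{n+i}$, $e_j\leftrightarrow e_{n+j}$). The diagonal generators $X_{i,n+i}$ each contribute a $1$-dimensional solution space: the $\psi$-condition forces the coefficient $c$ of $X_{i,n+i}$ to satisfy $a_i\overline{c} = -a_i c$ up to the $a$'s, i.e.\ $\overline c = -c$, which over $E_0$ is the line spanned by $\alpha$; hence $\alpha X_{i,n+i}$. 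Finally I would do the dimension count: the list has $\binom n2 + \binom n2 + \binom n2 + \binom n2 + n = 2n(n-1)+n = n(2n-1)=\binom{2n}{2}$ elements, matching $\dim_{E_0}\mathfrak g_0 = \dim_E\mathfrak{so}(2n) = \binom{2n}{2}$ (the unitary condition cuts the $E$-dimension in half while restriction of scalars doubles it), so linear independence of the listed vectors (clear, since distinct basis vectors $X_{rs}$ appear) plus the correct count shows they form a basis.

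The main obstacle is purely organizational: keeping the signs straight in the coupled conditions, since $X_{rs}$ acts on $e_r$ and $e_s$ through $b$-pairings that send index $i$ to $n+i$, so "preserving $\psi$" mixes the diagonalizing coordinates in a nonobvious pattern. I would handle this by treating the three types of index pairs uniformly via one auxiliary computation of $\psi(X_{rs}e_t, e_u)$, tabulated once, and then reading off all four $\pm$-families and the diagonal case from that table; the compatibility relation $a_ia_{n+i}=\lambda$ is invoked precisely at the step where one verifies the two $E_0$-independent generators attached to a given index pair are genuinely both in $\mathfrak g_0$ (equivalently, that the $\psi$-condition descends to an $E_0$-rational condition on that $2$-plane). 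No deeper input is needed; everything reduces to the elementary linear algebra already set up before the lemma.
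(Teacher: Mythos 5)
Your proposal is correct and follows essentially the same route as the paper, which sets up exactly the condition $\psi(Te_r,e_s)=-\overline{\psi(Te_s,e_r)}$ on elements written in the basis $X_{rs}$ of $\mathfrak{so}(W,b)$ and leaves the rest as a ``tedious calculation''; your organization of that calculation --- coupling $X_{ij}$ with $X_{n+i,n+j}$ and $X_{i,n+j}$ with $X_{j,n+i}$, treating $X_{i,n+i}$ alone, and invoking $a_ia_{n+i}=\lambda$ precisely where the two conditions attached to each coupled pair must coincide --- is the intended argument. (Note only that the final appeal to $\dim_{E_0}\mathfrak{g}_0=\binom{2n}{2}$ is redundant, and its heuristic justification unnecessary: solving the paired conditions already shows each $2$-plane, resp.\ line, of solutions is spanned by the listed elements.)
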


\section{Construction of the groups}

We keep the convention that $E_0$ is a totally real number field, resp.\  $E_0=\R$, and $E$ is an imaginary quadratic extension of $E_0$, resp.\  $E=\Cee$. 

\begin{definition}\label{defpsi} Let $e_1, \dots, e_{2n}$ be the standard $E$-basis of $W=E^{2n}$ and let $z = \sum_iz_ie_i$, $w = \sum_iw_ie_i$. For $\delta \in E_0$ and an integer $k$, $1\leq k \leq n$, define the Hermitian form $\psi_{\delta, k}$ by:
\begin{align*}
\psi_{\delta, k}(z,w) &= \delta z_1\bar{w}_1 + \dots + \delta z_k\bar{w}_k + z_{k+1}\bar{w}_{k+1} + \dots + z_n\bar{w}_n \\
&- z_{n+1}\bar{w}_{n+1} - \dots - z_{n+k}\bar{w}_{n+k}- \delta z_{n+k+1}\bar{w}_{n+k+1} - \dots - \delta z_{2n}\bar{w}_{2n}.
\end{align*}
In other words, in the previous notation, $a_i = \delta$ for $1\leq i \leq k$, $a_i = 1$ for $k+1 \leq i \leq n$, $a_{n+i} = -1$ for $1\leq i \leq k$, and $a_{n+i} = -\delta$ for $k+1 \leq i \leq n$. Hence $a_ia_{n+i} = \lambda = -\delta$ for all $i$, $1\leq i\leq n$, and so $J^2 = -\delta \Id$.
\end{definition} 

With the above definitions, $e_1, \dots, e_{2n}$ is a good basis, with $\lambda = -\delta$, and $W_1 = \operatorname{span}\{e_1, \dots, e_n\}$ is a good isotropic subspace of $W=E^{2n}$. We can restrict $\psi_{\delta, k}$ to the span $W_1$ of $e_1, \dots, e_n$, and in this case $\det \psi_{\delta, k}|W_1 = \delta^k$. On the other hand, there are some  permutations of  the good basis $e_1, \dots, e_n, e_{n+1}, \dots, e_{2n}$ for which it remains a good basis. Recall that the conditions we need are: (i) $b(e_i, e_{n+i}) = 1$ and all other $b(e_i, e_j) = 0$; (ii)  $\psi_{\delta, k}$ is diagonalized in the basis $e_1, \dots, e_n, e_{n+1}, \dots, e_{2n}$, say 
$\psi(z,w) = \sum_{i=1}^{2n}a_iz_i\bar{w}_i$; and finally 
(iii)  $a_ia_{n+i}$ is independent of $i$. Given integers $a,r$ with  $0\leq a\leq k$ and $0\leq r \leq n-k$, choose the new good basis $e_1', \dots, e_n', e_{n+1}', \dots, e_{2n}'$ as follows: 
\begin{enumerate}
\item For $i\leq a$, set $e_i' = e_i$ and $e_{n+i}' = e_{n+i}$; here $\psi(e_i', e_i') = \delta$ and $\psi(e_{n+i}', e_{n+i}') = -1$.
\item For $a+1 \leq i \leq k$, set $e_i' = e_{n+i}$ and $e_{n+i}'=e_i$; here $\psi(e_i', e_i') = -1$ and $\psi(e_{n+i}', e_{n+i}') = \delta$. 
\item For $k+1 \leq i \leq k+r$, set $e_i' = e_i$ and $e_{n+i}'=e_{n+i}$; here $\psi(e_i', e_i') = 1$ and $\psi(e_{n+i}', e_{n+i}') = -\delta$.
\item For $k+r+1 \leq i \leq n$, set $e_i' = e_{n+i}$ and $e_{n+i}'=e_i$; here $\psi(e_i', e_i') = -\delta$ and $\psi(e_{n+i}', e_{n+i}') = 1$.
\end{enumerate}
In other words, given $0\leq a\leq k$ and $0\leq r \leq n-k$,   after permuting the good basis $\{e_i\}$ by interchanging  $n-a-r$ of the $e_i$ with $e_{n+i}$, we arrive at another good basis where, for $i\leq n$,  $a$ of the coefficients of the $z_i\bar{w}_i$ are  $\delta$,  $k-a$ of the coefficients are $-1$, $r$ of the coefficients are $1$, and $n-k-r$ of the coefficients are $-\delta$. Then, defining $W_1' = \operatorname{span}\{e_1', \dots, e_n'\}$, $W_1'$ is a good isotropic subspace of $W$, with $\lambda = -\delta$ as before. Hence:

\begin{lemma}\label{chooseW1} Given $0\leq a\leq k$ and $0\leq r \leq n-k$, with $t = k-a$ and $s = n-k -r$, there exists a good isotropic subspace $W_1'$ of $W=E^{2n}$, with $\lambda =-\delta$, such that the determinant $\det (\psi_{\delta, k}|W_1') = \delta^a(-1)^t(-\delta)^s= (-1)^{t+s}\delta^{a+s}= (-1)^k\lambda^{a+s}$. In particular, if $a+s=2N$ is even, then $t+s \equiv k \pmod 2$, hence $\det (\psi_{\delta, k}|W_1') = (-1)^k\delta^{2N}$. \qed
\end{lemma}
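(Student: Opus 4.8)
The plan is to take the explicit new basis $e_1', \dots, e_{2n}'$ displayed in the four cases just before the lemma, check that it is again a good basis in the sense of Definition~\ref{goodcomp}, and then read off the discriminant and run the elementary parity bookkeeping that turns $\delta^a(-1)^t(-\delta)^s$ into $(-1)^k\lambda^{a+s}$.

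First I would verify the two conditions defining a good basis. Since each of the four cases only interchanges $e_i$ with $e_{n+i}$ inside a single hyperbolic pair, and $b(e_i,e_{n+i})=1$ with all other pairings zero, the symmetry of $b$ gives $b(e_i',e_{n+i}')=1$ and leaves all pairings between distinct pairs zero; hence $b$ is still in the standard hyperbolic form in the primed coordinates, so $W_1'=\operatorname{span}\{e_1',\dots,e_n'\}$ is $b$-isotropic. Since $\psi_{\delta,k}$ is diagonal in $\{e_i\}$ it stays diagonal after any permutation, and a glance at the four cases shows that in every one of them $\psi(e_i',e_i')\psi(e_{n+i}',e_{n+i}')=-\delta$; so condition (iii) holds with the same $\lambda=-\delta$, and $e_1',\dots,e_{2n}'$ is a good basis with good isotropic subspace $W_1'$.

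It then remains to compute $\det(\psi_{\delta,k}|W_1')=\prod_{i=1}^n\psi(e_i',e_i')$. By construction the diagonal entries $\psi(e_i',e_i')$ for $i\le n$ consist of $a$ copies of $\delta$ (case 1), $t=k-a$ copies of $-1$ (case 2), $r$ copies of $1$ (case 3), and $s=n-k-r$ copies of $-\delta$ (case 4), giving $\det(\psi_{\delta,k}|W_1')=\delta^a(-1)^t(-\delta)^s=(-1)^{t+s}\delta^{a+s}$. Finally, since $t=k-a$ and $(-1)^{-a}=(-1)^a$, one has $t+s\equiv k+a+s\pmod 2$, so $(-1)^{t+s}\delta^{a+s}=(-1)^{k+a+s}\delta^{a+s}=(-1)^k(-\delta)^{a+s}=(-1)^k\lambda^{a+s}$; and if $a+s=2N$ this says $t+s\equiv k\pmod 2$ and $\det(\psi_{\delta,k}|W_1')=(-1)^k\delta^{2N}$.

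I do not expect a genuine obstacle: the lemma is essentially a summary of the construction preceding it, and the only thing needing care is the bookkeeping — checking that each of the four index ranges contributes the stated diagonal value and that the product condition (iii) survives every swap. So the ``hard part'' is just keeping the four cases straight, not any substantive difficulty.
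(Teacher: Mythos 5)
Your proof is correct and follows exactly the paper's (implicit) argument: the lemma is stated with \qed precisely because it is a direct verification that the permuted basis from the four cases preceding it is still a good basis with $\lambda=-\delta$, followed by the same discriminant and parity computation $\delta^a(-1)^t(-\delta)^s=(-1)^{t+s}\delta^{a+s}=(-1)^k\lambda^{a+s}$ that you carry out. Nothing is missing.
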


\begin{definition} Let $G_{\delta, k}$ be the group of $E$-linear isomorphisms from $W$ to $W$ which have determinant $1$ and preserve the forms $b$ and $\psi_{\delta, k}$. Note that $G_{\delta, k}$, as a special case of the groups $G(W,b,\psi)$ defined in the last section,   is an algebraic group defined over $E_0$.
\end{definition}

We now classify the Hermitian forms $\psi_{\delta, k}$ and the groups $G_{\delta, k}$ in case $E_0 = \R$. Let $\psi_0=\psi_{1,k}$ (for any $k$) be the standard Hermitian form of signature $(n,n)$:
$$\psi_0(z,w) = \sum_{i=1}^n z_i\bar{w}_i - \sum_{i=1}^n z_{n+i}\bar{w}_{n+i}.$$ 
Let $\psi_{-1, k}$ be the Hermitian form of signature $(2n-2k, 2k)$ given by

\begin{align*}
\psi_{-1, k}(z,w) &= - z_1\bar{w}_1 - \dots - z_k\bar{w}_k + z_{k+1}\bar{w}_{k+1} + \dots + z_n\bar{w}_n \\
&- z_{n+1}\bar{w}_{n+1} - \dots - z_{n+k}\bar{w}_{n+k}+ z_{n+k+1}\bar{w}_{n+k+1} + \dots + z_{2n}\bar{w}_{2n}.
\end{align*}

Hence, for $\psi_0$, $\delta = 1$ and $\lambda = -1$, whereas,  for $\psi_{-1, k}$,  $\delta = -1$ and $\lambda = 1$. 

\begin{lemma} Suppose that $E_0 =\R$. Define the basis $e_i'$ of  $W$ by: $e_i ' = |\delta|^{-1/4}e_i$ if $1\leq i \leq k$ or $n+k+1\leq i \leq 2n$, and $e_i ' = |\delta|^{1/4}e_i$ if $k\leq i \leq n+k$. Then $b(e_i', e_j') = 0$ if $i\leq n$ and $j\neq n+i$, and $b(e_i', e_{n+i}') = 1$. Moreover:
\begin{enumerate}
\item[\rm(i)]   If $E_0 = \R$ and $\delta > 0$, then $\psi_{\delta, k}(e_i', e_j') = \delta^{1/2}  \psi_0(e_i, e_j)$. Hence there is a transformation of $W$ preserving $b$ which takes $\psi_{\delta, k}$ to the standard Hermitian form $\delta^{1/2}\psi_0$.
\item[\rm(ii)] If $E_0 = \R$ and $\delta <  0$, then $\psi_{\delta, k}(e_i', e_j') = |\delta|^{1/2}  \psi_{-1, k}(e_i, e_j)$. Hence there is a transformation of $W$ preserving $b$ which takes $\psi_{\delta, k}$ to the standard Hermitian form $|\delta|^{1/2}\psi_{-1, k}$. \qed
\end{enumerate}
\end{lemma}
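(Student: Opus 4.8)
The plan is to verify the lemma by a direct computation on the rescaled basis, organized around the observation that the scaling factors are \emph{forced}. Write $e_i' = c_i e_i$; we want the $c_i$ to be positive reals (so that no complex conjugation intervenes in evaluating the Hermitian form $\psi_{\delta,k}$), to satisfy $c_i c_{n+i} = 1$ for all $i \le n$ (so that $b$ remains in standard form), and to satisfy $c_i^2 |a_i| = |\delta|^{1/2}$ for all $i$ (so that after rescaling all diagonal coefficients of $\psi_{\delta,k}$ have absolute value $|\delta|^{1/2}$), where the $a_i$ are the coefficients of Definition~\ref{defpsi}. These two normalizations force $c_i = |\delta|^{1/4}|a_i|^{-1/2}$, and they are mutually consistent precisely because of the compatibility of $b$ and $\psi_{\delta,k}$: indeed $c_i c_{n+i} = |\delta|^{1/2}\,|a_i a_{n+i}|^{-1/2} = |\delta|^{1/2}|\lambda|^{-1/2} = 1$ since $\lambda = -\delta$ there. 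Substituting the explicit $a_i$ of Definition~\ref{defpsi} (so $|a_i| = |\delta|$ for $i \in \{1,\dots,k\}\cup\{n+k+1,\dots,2n\}$ and $|a_i| = 1$ otherwise) reproduces exactly the vectors $e_i'$ of the statement; in particular the two index ranges in the statement are to be read as partitioning $\{1,\dots,2n\}$.

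With the $c_i$ in hand, I would first dispose of the assertion about $b$: since $b(e_i, e_j) = 0$ unless $\{i,j\} = \{\ell, n+\ell\}$ for some $\ell \le n$, the same holds for the $e_i'$, which is the vanishing statement; and $b(e_i', e_{n+i}') = c_i c_{n+i}\, b(e_i, e_{n+i}) = c_i c_{n+i} = 1$. Hence the $E$-linear automorphism $\Phi\colon W \to W$ with $\Phi(e_i) = e_i'$ preserves $b$. Next I would compute $\psi_{\delta,k}$ on the new basis: it is again diagonal in the $e_i'$, matching the vanishing of the off-diagonal terms on the right-hand sides of (i) and (ii), and $\psi_{\delta,k}(e_i', e_i') = c_i^2 a_i$. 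Running through the four index blocks and using that $|\delta|^{-1/2}\delta$ equals $|\delta|^{1/2}$ or $-|\delta|^{1/2}$ according to the sign of $\delta$, every $\psi_{\delta,k}(e_i', e_i')$ comes out as $\pm|\delta|^{1/2}$; comparing the resulting sign pattern with the definitions of $\psi_0$ (signature $(n,n)$) and $\psi_{-1,k}$ (signature $(2n-2k,2k)$) shows that it equals $\delta^{1/2}\psi_0(e_i,e_i)$ when $\delta > 0$ and $|\delta|^{1/2}\psi_{-1,k}(e_i,e_i)$ when $\delta < 0$. This is exactly the content of (i) and (ii).

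For the final clauses: the map $\Phi$ above (equivalently, its inverse) preserves $b$ and, by (i) and (ii), carries $\psi_{\delta,k}$ to a positive real multiple of $\psi_0$ when $\delta > 0$, resp.\ of $\psi_{-1,k}$ when $\delta < 0$; this is the asserted transformation of $W$ preserving $b$. If desired, conjugation by $\Phi$ then identifies $G_{\delta,k}$ over $\R$ with $G(W,b,\psi_0)$ (a form of $\SO^*(2n)$) when $\delta > 0$, and with $G(W,b,\psi_{-1,k})$ when $\delta < 0$, since replacing a Hermitian form by a positive real scalar multiple does not change the linear maps preserving it. I do not expect any real obstacle here; the only point requiring attention is the bookkeeping of the four index blocks and keeping in mind that the rescaling factors are real, so that conjugate-linearity plays no role in the computation of $\psi_{\delta,k}$.
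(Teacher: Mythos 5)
Your computation is correct and is exactly the direct verification that the paper leaves to the reader (the lemma is stated with an omitted proof): rescaling by real factors $c_i=|\delta|^{1/4}|a_i|^{-1/2}$ with $c_ic_{n+i}=1$ preserves $b$ and turns the diagonal coefficients of $\psi_{\delta,k}$ into $\pm|\delta|^{1/2}$ with the sign pattern of $\psi_0$ or $\psi_{-1,k}$ according to the sign of $\delta$. Your reading of the index ranges as a partition (i.e.\ $k+1\le i\le n+k$ for the factor $|\delta|^{1/4}$) is the intended one, so nothing further is needed.
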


\begin{corollary}  If $E_0 =\R$ and $\delta > 0$, then $G_{\delta, k} \cong \SO^*(2n)$. Moreover, $\operatorname{Res}_{\Cee/\R}W$ is an irreducible $\R[G]$-module, and its endomorphism algebra is $\mathbb{H}$.
\end{corollary}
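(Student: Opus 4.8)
The plan is to reduce to the normalized case $\delta = 1$, identify the resulting group with $\SO^*(2n)$ via the quaternionic structure $J$, and then read off the endomorphism algebra from Proposition~\ref{standardrep}.

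First I would invoke the preceding Lemma. By part (i) of that Lemma (valid because $\delta > 0$), there is a $\Cee$-linear automorphism $\Phi$ of $W$ preserving $b$ and carrying $\psi_{\delta, k}$ to $\delta^{1/2}\psi_0$. Since a $\Cee$-linear automorphism preserves a Hermitian form $\psi$ if and only if it preserves $c\psi$ for any $c \in \R^*$, the isometry group is unchanged under rescaling $\psi_0$ by $\delta^{1/2} \in \R_{>0}$, so conjugation by $\Phi$ yields an isomorphism $G_{\delta, k} \cong G(W, b, \psi_0)$; in particular the right-hand side depends neither on $\delta > 0$ nor on $k$, and it suffices to treat $\psi_0$.

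Next I would identify $G(W, b, \psi_0)$ with $\SO^*(2n)$. For $\psi_0$ one has $\lambda = -\delta = -1$ (Definition~\ref{defpsi}), so the associated operator $J = B^{-1}\circ\Psi$ is conjugate linear with $J^2 = -\Id$. From the first Lemma of \S1 one gets $\psi_0(v,w) = b(v, Jw)$, whence the compatibility $b(Jv, Jw) = -\overline{b(v,w)}$ (equivalently, replacing $J$ by $iJ$, a standard quaternionic structure compatible with the orthogonal form $b$). The straightforward converse to the assertion that $J$ commutes with the $G_0$-action shows that, for $g \in \SO(W,b)$, one has $g \in G(W,b,\psi_0)$ if and only if $gJ = Jg$. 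Thus $G_{\delta,k}$ is isomorphic to the group of $b$-orthogonal, determinant-one transformations of $\Cee^{2n}$ commuting with $J$, which is by definition the real form $\SO^*(2n)$ of \cite{Knapp}; as $\SO^*(2n)$ is connected (maximal compact $\U(n)$), this is the entire group, giving $G_{\delta,k} \cong \SO^*(2n)$. Finally, for the ``moreover'' part I would apply Proposition~\ref{standardrep} with $E_0 = \R$, $E = \Cee$: it gives $\End_{\R[G_{\delta,k}]}\Res_{\Cee/\R}W = \Cee[J]$, with $\Res_{\Cee/\R}W$ irreducible over $\R$ exactly when $\lambda$ is not a norm from $\Cee^*$. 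Here $\lambda = -\delta$ and $\Nm_{\Cee/\R}(\Cee^*) = \R_{>0}$, so $\lambda = -\delta < 0$ is not a norm; hence $\Res_{\Cee/\R}W$ is $\R$-irreducible and $\Cee[J]$ is the $4$-dimensional division algebra over $\R$ generated by $i$ (scalar multiplication) and $J$ with $i^2 = -1$, $Ji = -iJ$, $J^2 = -\delta$. Since $\delta > 0$, after rescaling $J$ by $\delta^{-1/2}$ this is a quaternion algebra over $\R$, and being a division algebra it must be $\mathbb{H}$.

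I do not anticipate a genuine obstacle: the statement is essentially a formal consequence of the preceding Lemma and of Proposition~\ref{standardrep}. The only step needing care is the identification in the third paragraph — matching the intrinsic description $\{g \in \SO(W,b) : gJ = Jg\}$ (with $J^2 = -\Id$) with the specific real form $\SO^*(2n)$ of \cite{Knapp}, and checking connectedness so that no index-two ambiguity creeps in; the one piece of genuine arithmetic is simply that $-\delta$ is not a norm from $\Cee$ to $\R$, which is what forces $\R$-irreducibility and produces $\mathbb{H}$ rather than $\mathbb{M}_2(\R)$.
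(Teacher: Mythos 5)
Your proposal is correct and follows essentially the same route as the paper: reduce to the standard form $\psi_0$ via part (i) of the preceding lemma (rescaling by $\delta^{1/2}>0$ does not change the isometry group) and then apply Proposition~\ref{standardrep}, noting that $\lambda=-\delta<0$ is not a norm from $\Cee$ to $\R$, so the endomorphism algebra $\Cee[J]$ is the $4$-dimensional real division algebra $\mathbb{H}$ and $\Res_{\Cee/\R}W$ is irreducible. Your extra care in matching $G(W,b,\psi_0)$ with Knapp's $\SO^*(2n)$ only makes explicit what the paper treats as the definition of that real form.
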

\begin{proof}  By (i) of the lemma, we may replace $\psi_{\delta, k}$ with the standard form $\psi_0$. The last statement then follows from Proposition~\ref{standardrep}.
\end{proof}

\begin{corollary} If $E_0 =\R$ and $\delta > 0$, or if $E_0$ is a totally real number field such that there exists an embedding of $E_0$ in $\R$ for which $\delta > 0$ then the standard representation $W$ of $G_{\delta, k}$, which is defined over the imaginary quadratic extension $E$ of $E_0$, cannot be defined over $E_0$. \qed
\end{corollary}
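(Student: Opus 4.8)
The plan is to reduce the statement immediately to Proposition~\ref{standardrep}. That proposition asserts that the standard representation $W$ of $G_0 = G(W,b,\psi)$ can be defined over $E_0$ if and only if $\lambda = \Nm_{E/E_0}(c)$ for some $c\in E$. For the form $\psi_{\delta,k}$ we have $\lambda = -\delta$ by Definition~\ref{defpsi}, so it suffices to show that $-\delta$ is \emph{not} a norm from $E$ to $E_0$ whenever there is an embedding $\sigma\colon E_0\hookrightarrow \R$ with $\sigma(\delta) > 0$ (in the case $E_0=\R$ we simply take $\sigma = \id$).

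First I would fix such an embedding $\sigma$. Since $E$ is an imaginary quadratic (equivalently CM) extension of $E_0$, the embedding $\sigma$ extends to a complex embedding $\tilde\sigma\colon E\hookrightarrow\Cee$, and under $\tilde\sigma$ the nontrivial element of $\Gal(E/E_0)$ — that is, the involution $\alpha\mapsto\bar\alpha$ — corresponds to complex conjugation on $\Cee$. Hence for every $c\in E$,
$$\tilde\sigma\big(\Nm_{E/E_0}(c)\big) = \tilde\sigma(c\bar c) = \tilde\sigma(c)\,\overline{\tilde\sigma(c)} = |\tilde\sigma(c)|^2 \geq 0 .$$

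Next I would argue by contradiction: suppose $-\delta = \Nm_{E/E_0}(c)$ for some $c\in E$. Applying $\tilde\sigma$ and using $\tilde\sigma|_{E_0} = \sigma$ gives $-\sigma(\delta) = |\tilde\sigma(c)|^2 \geq 0$, contradicting $\sigma(\delta) > 0$ (here $\delta\neq 0$ since $\psi_{\delta,k}$ is nondegenerate). Therefore $-\delta$ is not a norm, and Proposition~\ref{standardrep} shows that $W$ cannot be defined over $E_0$. The case $E_0 = \R$, $E = \Cee$ is exactly the special case $\sigma = \id$: then $\Nm_{\Cee/\R}(c) = |c|^2 \geq 0 > -\delta$, so the same conclusion holds.

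There is essentially no serious obstacle here; the argument is a direct application of the already-established Proposition~\ref{standardrep} together with the elementary fact that norms from a CM field to its totally real subfield are nonnegative at every archimedean place. The only point meriting a sentence of care is that Galois conjugation on the CM field $E$ goes over to ordinary complex conjugation under each complex embedding, which is what forces the sign obstruction.
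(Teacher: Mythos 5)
Your proposal is correct and is essentially the argument the paper intends: the corollary is left as immediate (with a \qed) from Proposition~\ref{standardrep} together with $\lambda=-\delta$ for $\psi_{\delta,k}$, and your observation that norms from the CM field $E$ are nonnegative at every archimedean place of $E_0$ (since $\Gal(E/E_0)$-conjugation becomes complex conjugation under any extension of $\sigma$ to $E$) is exactly the sign obstruction being used. No gaps.
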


To handle the case $E_0 =\R$ and $\delta < 0$, we may assume that $\delta = -1$. In this case, $J^2 = \Id$, and hence 
$\operatorname{Res}_{\Cee/\R}W$ is a direct sum of the $+1$ and $-1$ eigenspaces $W(+1)$ and $W(-1)$ of $J$. It is easy to see that multiplication by $\sqrt{-1}$ exchanges  $W(+1)$ and $W(-1)$, so that it is enough to consider just one of them, say $W(+1)$. 

\begin{lemma} For $E_0 = \R$ and $\delta < 0$, $G= G_{\delta, k} \cong G_{-1, k}\cong \SO(2n-2k, 2k)$, and the isomorphic real representations $W(\pm 1)$ are both isomorphic to the standard real representation  of $\SO(2n-2k, 2k)$.
\end{lemma}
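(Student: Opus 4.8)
The plan is to identify $G_{-1,k}$ concretely as an orthogonal group and then match up the representations. First I would use the preceding lemma (the $\delta<0$ case) to replace $\psi_{\delta,k}$ by $\psi_{-1,k}$, so that $J^2 = \Id$ and $\operatorname{Res}_{\Cee/\R}W = W(+1)\oplus W(-1)$, with multiplication by $\sqrt{-1}$ interchanging the two eigenspaces; hence $W(+1)\cong W(-1)$ as real $G$-modules and each has real dimension $2n$. Since $J$ commutes with the $G_0$-action (as noted after the definition of $J$ in Section 1), $G = G_{-1,k}$ acts on $W(+1)$, and this gives a faithful $2n$-dimensional real representation.

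Next I would produce the orthogonal structure on $W(+1)$. On $W(+1)$ the bilinear form $b$ and the Hermitian form $\psi_{-1,k}$ should agree up to scalar: for $v,w\in W(+1)$ one has $Jv = v$, $Jw=w$, and since $J = B^{-1}\circ\Psi$ one gets $\Psi(v) = B(v)$, i.e.\ $\psi_{-1,k}(w,v) = b(v,w)$ for all such $v,w$. Because $\psi_{-1,k}$ is Hermitian and $b$ is symmetric, restricting to $W(+1)$ forces $b|_{W(+1)}$ to be a real-valued symmetric bilinear form (it is the restriction of $\psi_{-1,k}$, which on the real subspace $W(+1)$ takes real values), and it is nondegenerate because $b$ is. Its signature is read off from the signature $(2n-2k,2k)$ of $\psi_{-1,k}$: the real form $b|_{W(+1)}$ inherits signature $(2n-2k,2k)$. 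Thus $G_{-1,k}$ preserves a nondegenerate real symmetric form of signature $(2n-2k,2k)$ on a $2n$-dimensional space and lies in $\SO(2n-2k,2k)$; comparing dimensions of the groups (both have dimension $\binom{2n}{2}=n(2n-1)$) and noting $G_{-1,k}$ is connected gives $G_{-1,k}\cong\SO(2n-2k,2k)$ and identifies $W(\pm1)$ with the standard representation.

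The main obstacle I expect is bookkeeping the signature of $b|_{W(+1)}$ correctly and checking that $W(+1)$ really is a $2n$-dimensional real form on which $b$ restricts nondegenerately — a priori $b|_{W(+1)}$ could degenerate. The cleanest way around this is to choose an explicit real basis of $W(+1)$ adapted to the good basis $e_1,\dots,e_{2n}$: using Lemma 1.2, $J(e_i) = a_i e_{n+i}$ with the $a_i\in\{\pm1\}$ for $\psi_{-1,k}$, so vectors of the form $e_i + a_i^{-1}e_{n+i}$ (suitably normalized, and their $\sqrt{-1}$-multiples landing in $W(-1)$) span $W(+1)$ over $\R$; one then computes $b$ on these $2n$ vectors directly and reads off nondegeneracy and signature. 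An alternative, perhaps slicker, is to invoke Proposition~\ref{standardrep}: with $\lambda = 1 = \Nm_{\Cee/\R}(1)$, the algebra $E[J]=\Cee[J]\cong \mathbb{M}_2(\R)$, so $\operatorname{Res}_{\Cee/\R}W$ is the sum of two copies of an irreducible $2n$-dimensional real $G$-module, which must be the standard representation of the orthogonal group it factors through; the eigenspace description then pins this module down as $W(+1)$.
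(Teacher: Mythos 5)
Your reduction to $\delta=-1$, the decomposition $\Res_{\Cee/\R}W = W(+1)\oplus W(-1)$, the observation that on $W(+1)$ the forms $b$ and $\psi_{-1,k}$ coincide and are real-valued, and the plan to read off nondegeneracy and signature from an explicit real basis adapted to the good basis are all sound, and for the quadratic-form half of the statement this is essentially the paper's argument. One small slip: the $n$ vectors $e_i + a_ie_{n+i}$ do not span the $2n$-dimensional real space $W(+1)$; you also need the vectors $\sqrt{-1}(e_i - a_ie_{n+i})$, and it is exactly this combined basis that the paper uses, giving $2(n-k)$ vectors of square $2$ and $2k$ of square $-2$, hence signature $(2n-2k,2k)$.

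The genuine gap is in the identification of the group. You deduce $G_{-1,k}\cong\SO(2n-2k,2k)$ from equality of dimensions together with ``$G_{-1,k}$ is connected.'' Neither input is justified as written: the dimension of $G_{-1,k}$ is asserted (it does follow from the Lie algebra basis of Lemma~\ref{Liealg}, which you would have to invoke), and the connectedness claim is false at the level of real points --- the lemma you are proving says $G_{-1,k}\cong\SO(2n-2k,2k)$, which for $0<k<n$ has two connected components. More importantly, a dimension count can only match up Lie algebras, i.e.\ identity components: at best it shows the image of $G_{-1,k}$ in the orthogonal group of $b|_{W(+1)}$ contains the identity component $\SO^0(2n-2k,2k)$, and it cannot give surjectivity onto the other component, nor equality rather than mere containment. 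The missing idea, which is the actual content of the paper's proof, is the converse extension: a complex-linear $g$ preserves any two of $b$, $\psi_{-1,k}$, $J$ if and only if it preserves all three, and any real-linear transformation of $W(+1)$ preserving $b|_{W(+1)}$ extends uniquely to a complex-linear transformation of $W$ commuting with $J$ and preserving $b$ (since $b$ is the $\Cee$-bilinear extension of $b|_{W(+1)}$), hence preserving $\psi_{-1,k}$ because $\Psi = B\circ J$; determinants match because the complex determinant of the extension equals the real determinant on $W(+1)$. This yields a bijection of $G_{-1,k}$ onto $\SO(b|_{W(+1)})\cong\SO(2n-2k,2k)$, including the non-identity component, and simultaneously identifies $W(\pm1)$ with the standard representation, with no appeal to dimension or connectedness. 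Your alternative via Proposition~\ref{standardrep} does not close this gap either: it only shows the representation descends to $\R$, and neither identifies the real group nor computes the signature.
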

\begin{proof} Clearly, $g\in G_{-1, k}$ preserves any two of $b$, $\psi_{-1, k}$, $J$ $\iff$ it preserves all three, and a complex linear $g$ commuting with $J$ defines a real linear transformation of $W(+1)$; conversely any real linear transformation of $W(+1)$ preserving the restriction of $b$ extends uniquely to a complex linear transformation of $W$ preserving $b$, $J$ and hence $\psi_{-1, k}$. So it will suffice to find a real basis of $W(+1)$ which diagonalizes $b| W(+1)$ and count the number of positive and negative eigenvalues.

Note that $J \colon W \to W$ is conjugate linear and satisfies: $J(e_i) = - e_{n+i}$, for $1\leq i\leq k$, $J(e_i) = e_{n+i}$, $k+1\leq i \leq n$, $J(e_{n+i}) = -e_i$, $1\leq i\leq k$, and $J(e_{n+i}) =  e_i$ for $k+1\leq i \leq n$. Then a real basis for the $2n$-dimensional real vector space $W(+1)$ is given by: $e_i + e_{n+i}$, $k+1\leq i \leq n$, $\sqrt{-1}(e_i - e_{n+i})$, $k+1\leq i \leq n$, $\sqrt{-1}(e_i + e_{n+i})$, $1\leq i\leq k$, and $e_i - e_{n+i}$, $1\leq i\leq k$. It is easy to check that this is a diagonal basis. Moreover, for $k+1\leq i \leq n$,
$$b(e_i + e_{n+i}, e_i + e_{n+i}) = b(\sqrt{-1}(e_i - e_{n+i}), \sqrt{-1}(e_i - e_{n+i})) = 2,$$
whereas for $1\leq i\leq k$ we have
$$b(\sqrt{-1}(e_i + e_{n+i}), \sqrt{-1}(e_i + e_{n+i})) = b(e_i - e_{n+i}, e_i - e_{n+i}) = -2.$$
Hence the signature of $b|W(+1)$ is $(2n-2k, 2k)$ as claimed.
\end{proof}

\begin{corollary} Let $E_0$ be a totally real number field, with $\sigma_1, \dots, \sigma_d$ the distinct embeddings of $E_0$ in $\R$, and let $\delta \in E_0$ be such that $\sigma_1(\delta) >0$ and $\sigma_i (\delta) < 0$ for $i>1$ (note that such $\delta$ exist by the approximation theorem). For $n = 6$ and $k=1$, define the group $G_{\delta, 1}$ as above and let $G_{\delta, 1; i, \R}$ be the real group defined by extension of scalars via the embedding $\sigma_i$. Then $G_{\delta, 1; 1, \R} \cong \SO^*(12)$ and $G_{\delta, 1; i, \R} \cong \SO(2, 10)$ for $i > 1$. \qed
\end{corollary}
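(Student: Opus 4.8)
The plan is to reduce the assertion to the two real cases already settled above, via base change along the real embeddings $\sigma_i$. First I would record the elementary fact that, since $E$ is an imaginary quadratic extension of the totally real field $E_0$, every embedding $\sigma_i\colon E_0\hookrightarrow\R$ extends to a complex embedding of $E$, so that $E\otimes_{E_0,\sigma_i}\R\cong\Cee$; thus extending scalars along $\sigma_i$ carries the pair $(E_0,E)$ to $(\R,\Cee)$. Under this base change the group $G_{\delta,k}$ — which by definition is the intersection of $\Res_{E/E_0}\SO(W,b)$ with $\SU(W,\psi_{\delta,k})$ inside $\Res_{E/E_0}\GL(W)$ — becomes the real group whose points are the $\Cee$-linear automorphisms of $W\otimes_{E_0,\sigma_i}\R$ of determinant $1$ preserving the form $b$ (whose Gram matrix has rational, in fact integral, entries and is therefore unchanged) and the Hermitian form obtained from $\psi_{\delta,k}$ by applying $\sigma_i$ to its diagonal coefficients. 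In the notation of Definition~\ref{defpsi} the latter form is $\psi_{\sigma_i(\delta),k}$, so $G_{\delta,1;i,\R}\cong G_{\sigma_i(\delta),1}$ as real algebraic groups.

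Next I would invoke weak approximation at the archimedean places of $E_0$ to produce $\delta\in E_0$ with $\sigma_1(\delta)>0$ and $\sigma_i(\delta)<0$ for all $i>1$. Then for $i=1$ the element $\sigma_1(\delta)$ is positive, and the corollary for the case $E_0=\R$, $\delta>0$ (deduced above from part (i) of the lemma normalizing $\psi_{\delta,k}$ over $\R$) gives $G_{\sigma_1(\delta),1}\cong\SO^*(2n)$, which for $n=6$ is $\SO^*(12)$. For $i>1$ the element $\sigma_i(\delta)$ is negative, so after the rescaling $\psi_{\delta,k}\mapsto|\delta|^{1/2}\psi_{-1,k}$ in that lemma we are reduced to $\delta=-1$, and the lemma for the case $E_0=\R$, $\delta<0$ gives $G_{\sigma_i(\delta),1}\cong\SO(2n-2k,2k)$; with $n=6$ and $k=1$ this is $\SO(10,2)\cong\SO(2,10)$. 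Combining with the identification of the previous paragraph yields $G_{\delta,1;1,\R}\cong\SO^*(12)$ and $G_{\delta,1;i,\R}\cong\SO(2,10)$ for $i>1$.

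I do not anticipate a genuine obstacle: all the substantive content lies in the lemmas preceding the corollary. The only points requiring a little care are (a) checking that formation of the intersection $\Res_{E/E_0}\SO(W,b)\cap\SU(W,\psi)$ commutes with base change along $\sigma_i$, i.e.\ that the real points of the base-changed group are exactly the transformations of $W\otimes_{E_0,\sigma_i}\R$ preserving the transported forms — this is immediate once one notes that $\Res_{E/E_0}$ and the defining equations are compatible with $\otimes_{E_0,\sigma_i}\R$ — and (b) the signature bookkeeping, namely that $(2n-2k,2k)=(10,2)$ when $n=6$ and $k=1$, together with $\SO(p,q)\cong\SO(q,p)$. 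Both are routine once the dictionary ``$\sigma_i$ sends $(E_0,E,\psi_{\delta,k})$ to $(\R,\Cee,\psi_{\sigma_i(\delta),k})$'' is in place.
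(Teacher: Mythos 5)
Your proposal is correct and follows essentially the same route as the paper: the corollary is stated there with an immediate \qed precisely because it reduces, after extension of scalars along each $\sigma_i$ (which turns $(E_0,E,\psi_{\delta,k})$ into $(\R,\Cee,\psi_{\sigma_i(\delta),k})$), to the two real cases already established — $G_{\delta,k}\cong\SO^*(2n)$ for $\delta>0$ and $G_{\delta,k}\cong\SO(2n-2k,2k)$ for $\delta<0$. Your added remarks on the compatibility of the base change with the defining forms and on the signature bookkeeping are accurate and fill in exactly the routine details the paper leaves implicit.
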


\section{Rationality of the half spin representation}

In the preceding section, we constructed a group $G_{\delta, 1}$ defined over the totally real number field $E_0$, such that, for a suitable $\delta$,  the real group induced by the embedding $\sigma_1$ is isomorphic to $\SO^*(12)$ and the real groups induced by the embedding $\sigma_i$, $i> 1$, are isomorphic to $\SO(2,10)$. Viewing $G_{\delta, 1}$ as a subgroup of $\SO(W,b)$, the orthogonal group associated to $b$, let $G_1$ be the inverse image of $G_{\delta, 1}$  in the Spin double cover $\Spin(W,b)$ of $\SO(W,b)$. Hence $G_1$ acts on the half spin representations $S^{\pm}$ of $\Spin(W,b)$  associated to the $E$-vector space $W$, where $W$ is the standard representation of $\SO(W,b)$ and of $G_{\delta, 1}$. The representations $S^{\pm}$ of $G_1$  are  \emph{a priori} only defined over $E$, and we want to find necessary and sufficient conditions for them to be defined over $E_0$. As in Proposition~\ref{standardrep}, this amounts to deciding when  the endomorphism algebra  of the $E_0[G_1]$-module $\Res_{E/E_0}S^{\pm}$ is a matrix algebra $\mathbb{M}_2(E_0)$ and when it is a division algebra.  

We begin by reviewing the salient properties of the Clifford algebra  $C=C(W,b)$ of $b$  and the half spin representations, using Fulton--Harris \cite{fultonharris} as a reference (but our notation differs slightly from theirs) or Bourbaki \cite{bourbaki2}. In particular, for a fixed good  basis $e_1, \dots, e_{2n}$ of $W$, $C(W,b)$ is the $\Zee/2\Zee$-graded quotient of the tensor algebra $T^*W$ by the relations $e_ie_j = -e_je_i$, $j \neq n \pm i$, and $e_ie_{n+i} + e_{n+i}e_i = 2$. (Here and in the rest of this paper the tensor and exterior algebras of $W$ or $W_1$ will always be as $E$-vector spaces.)

Let $W_1$ be the good isotropic $E$-vector subspace of $W$ spanned by $e_1, \dots, e_n$ and let $W_2$ be the $b$-isotropic $E$-vector subspace of $W$ spanned by $e_{n+1}, \dots, e_{2n}$. 
There is an algebra isomorphism  $C(W,b) \cong \End(\bigwedge^{\bullet} W_1)$ and a corresponding isomorphism  $C^{\text{\rm{even}}}(W,b) \cong   \End(\bigwedge^{\text{\rm{even}}}W_1)\oplus \End(\bigwedge^{\text{\rm{odd}}}W_1)$: Given $e_i\in W_1$, $e_i$ acts on $\bigwedge^{\bullet} W_1$ via $\ell(e_i)$, wedge product with $e_i$, and $e_{n+i}\in W_2$ acts via $2\iota(e_{n+i})$, the interior product with $e_{n+i}$ viewed as an element of $W_1\spcheck$. However, to avoid the annoying factors of $2$ in what follows, we will scale $b$ by $1/2$. This does not change any of the calculations in an essential way but replaces  the defining relation $e_ie_{n+i} + e_{n+i}e_i = 2$ by $e_ie_{n+i} + e_{n+i}e_i = 1$ and hence   $e_{n+i}\in W_2$  now acts via $\iota(e_{n+i})$, i.e.\ without the factor of $2$. The half spin representation spaces  of $\Spin(W,b)$ are then given by
$$
S^+  = \bigwedge^{\text{\rm{even}}}W_1;\qquad 
S^-  = \bigwedge^{\text{\rm{odd}}}W_1.
$$
Here the action of the Lie algebra $\mathfrak{so}(W,b)$ on $S^\pm$ is given  in \cite[p. 305]{fultonharris}, and will be recalled later.
In terms of the Bourbaki labeling of the simple roots \cite{bourbaki}, $S^+$ has highest weight $\varpi_{n}$ and $S^-$ has highest weight $\varpi_{n-1}$ in case $n =2m$ is even (the only case which will concern us), whereas $S^+$ has highest weight $\varpi_{n-1}$ and $S^-$ has highest weight $\varpi_{n}$ in case $n$ is odd. Moreover, for $n$ even there are nondegenerate $\Spin(W,b)$-invariant forms on $S^\pm$ (which are either symmetric or symplectic depending on the parity of $m$), and hence $(S^+)\spcheck \cong S^+$, $(S^-)\spcheck \cong S^-$ as $E[\Spin(W,b)]$-modules.

The direct sum decomposition $W = W_1\oplus W_2$ induces a $\Zee$-grading on $C(W,b)$ and on $C^{\text{\rm{even}}}(W,b)$, where the elements in $W_1$ have degree $1$ and those in $W_2$ have degree $-1$, since the only interesting  relation is $e_ie_{n+i} + e_{n+i}e_i = 1$ which has degree zero (the relations $e_i e_j = -e_je_i$, $j \neq i\pm n$, don't cause a problem). We write 
$$C(W,b)= \bigoplus_{d\in \Zee}C_d(W,b) \text{ and } C^{\text{\rm{even}}}(W,b) = \bigoplus _{d\in \Zee}C^{\text{\rm{even}}}_{2d}(W,b).$$
The gradings on $C(W,b)$ and on $C^{\text{\rm{even}}}(W,b)$ correspond to the natural gradings:
\begin{gather*}
\End(\bigwedge^{\bullet} W_1) \cong \bigoplus_{d\in \Zee}\End^d(\bigwedge^{\bullet} W_1)\\
\End(\bigwedge^{\text{\rm{even}}} W_1) \cong \bigoplus_{d\in \Zee}\End^{2d}(\bigwedge^{\text{\rm{even}}} W_1), \qquad \End(\bigwedge^{\text{\rm{odd}}} W_1) \cong \bigoplus_{d\in \Zee}\End^{2d}(\bigwedge^{\text{\rm{odd}}} W_1),
\end{gather*}
 where  
 $A\in \End^d(\bigwedge^{\bullet} W_1)$ $\iff$ $ A(\bigwedge^kW_1) \subseteq \bigwedge^{k+d}W_1$, and similarly for the summands $\End(\bigwedge^{\text{\rm{even}}} W_1)$, $\End(\bigwedge^{\text{\rm{odd}}} W_1)$.
 
We now describe how the half spin representations depend on the choice of the isotropic subspace $W_1$. In what follows, we use freely the notation of \cite{bourbaki}. Fix once and for all a good  basis $\mathbf{e} = e_1, \dots, e_{2n}$, and hence isotropic subspaces  $W_1 = \operatorname{span}\{e_1, \dots, e_n\}$ and $W_2 = \operatorname{span}\{e_{n+1}, \dots, e_{2n}\}$. Then there is a natural choice of maximal torus $T$ corresponding to diagonal matrices in the basis $\mathbf{e}$, hence weights $\varepsilon_i$ for $1\leq i\leq n$. Note that, in an obvious sense, $\varepsilon_{n+i} = -\varepsilon_i$, since if $g\in T$ and $g(e_i) = ce_i$, then $g(e_{n+i}) = c^{-1}e_{n+i}$. The  half spin representations $S^\pm$ depend on the choice of the isotropic subspace $W_1$ and hence on the good   basis $\mathbf{e} = e_1, \dots, e_{2n}$. To emphasize this dependence, we shall write $S^\pm(\mathbf{e})$ where necessary.  

Now suppose, as in Lemma~\ref{chooseW1},  that we are given another good  basis $\mathbf{e}' = e_1', \dots, e_{2n}'$, which is obtained from $\mathbf{e}$ by switching $k$ of the basis vectors $e_i$, $i\leq n$, with the corresponding basis vectors $e_{n+i}$. In other words, there exists a subset $I\subseteq \{1, \dots, n\}$ with $\#(I) = k$ such that, for $i\in I$,  $e_i'= e_{n+i}$ and $e_{n+i}' = e_i$, whereas, for $i\notin I$, $e_i'=  e_i$ and $e_{n+i}' =e_{n+i}$.  We obtain isotropic subspaces $W_1'=\operatorname{span}\{e_1', \dots, e_n'\}$ and $W_2' = \operatorname{span}\{e_{n+1}', \dots, e_{2n}'\}$ as before.  Let $S^\pm(\mathbf{e}')$ be the half spin representations constructed using the isotropic subspaces $W_1'$, $W_2'$. Then we have the following:

\begin{lemma}\label{newhalfspins} In the notation above, if $k$ is even, then, as $\Spin(2n)$-modules, $S^+(\mathbf{e}')\cong  S^+(\mathbf{e})$ and $S^-(\mathbf{e}')\cong  S^-(\mathbf{e})$.   If $k$ is odd, then  $S^+(\mathbf{e}')\cong  S^-(\mathbf{e})$ and $S^-(\mathbf{e}')\cong  S^+(\mathbf{e})$.  
\end{lemma}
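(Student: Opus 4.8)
The plan is to do the comparison at the level of Clifford modules, rather than tracking highest weights through a change of maximal torus. The key input is that the given identification $C(W,b)\cong\End(\bigwedge^{\bullet}W_1)$ exhibits $C(W,b)$ as a matrix algebra over $E$, so it has a unique simple module up to isomorphism; the analogous identification with $\End(\bigwedge^{\bullet}W_1')$ shows that $\bigwedge^{\bullet}W_1'$ is also that simple module. Hence there is a $C(W,b)$-linear isomorphism $\Phi\colon\bigwedge^{\bullet}W_1\to\bigwedge^{\bullet}W_1'$, unique up to an $E^{*}$-scalar. Restricting the $C(W,b)$-action to $\Spin(W,b)\subseteq C^{\mathrm{even}}(W,b)$, the map $\Phi$ is $\Spin(W,b)$-equivariant, so it gives an isomorphism of $\Spin(W,b)$-modules $S^{+}(\mathbf{e})\oplus S^{-}(\mathbf{e})\to S^{+}(\mathbf{e}')\oplus S^{-}(\mathbf{e}')$.

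Next I would use irreducibility to see that $\Phi$ must match up the half-spin summands one way or the other. Since $n=2m$ is even, $S^{+}(\mathbf{e}')$ and $S^{-}(\mathbf{e}')$ are non-isomorphic irreducible $\Spin(W,b)$-modules (highest weights $\varpi_{n}\neq\varpi_{n-1}$), so the only irreducible $\Spin(W,b)$-submodules of their direct sum are $S^{+}(\mathbf{e}')$ and $S^{-}(\mathbf{e}')$ themselves. As $S^{+}(\mathbf{e})$ is irreducible, $\Phi$ carries it isomorphically onto one of the two, and therefore carries $S^{-}(\mathbf{e})$ isomorphically onto the other. Thus $\{S^{+}(\mathbf{e}'),S^{-}(\mathbf{e}')\}=\{S^{+}(\mathbf{e}),S^{-}(\mathbf{e})\}$ as isomorphism classes, and only the matching remains to be determined.

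To pin down the matching I would evaluate $\Phi$ on the vacuum vector $1\in\bigwedge^{0}W_1\subseteq S^{+}(\mathbf{e})$. In the $W_1$-model, the line $\bigwedge^{0}W_1$ is characterized, up to scalar, as the common kernel of the annihilation operators $\iota(e_{n+1}),\dots,\iota(e_{2n})$; hence $\Phi(1)$ is the corresponding vector in $\bigwedge^{\bullet}W_1'$, i.e.\ the common kernel of the operators by which $e_{n+1},\dots,e_{2n}$ act there. In the $W_1'$-model, for $i\notin I$ the element $e_{n+i}=e_{n+i}'$ acts by the contraction $\iota(e_{n+i}')$, while for $i\in I$ the element $e_{n+i}=e_i'$ acts by the wedge $\ell(e_i')$. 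The unique line (up to scalar) of $\bigwedge^{\bullet}W_1'$ annihilated by $\iota(e_{n+i}')$ for all $i\notin I$ and by $\ell(e_i')$ for all $i\in I$ is $E\cdot\bigwedge_{i\in I}e_i'\subseteq\bigwedge^{k}W_1'$. Therefore $\Phi(1)\in\bigwedge^{k}W_1'$, which lies in $S^{+}(\mathbf{e}')=\bigwedge^{\mathrm{even}}W_1'$ when $k$ is even and in $S^{-}(\mathbf{e}')=\bigwedge^{\mathrm{odd}}W_1'$ when $k$ is odd. Combined with the previous paragraph, this yields $S^{+}(\mathbf{e})\cong S^{+}(\mathbf{e}')$ and $S^{-}(\mathbf{e})\cong S^{-}(\mathbf{e}')$ for $k$ even, and $S^{+}(\mathbf{e})\cong S^{-}(\mathbf{e}')$, $S^{-}(\mathbf{e})\cong S^{+}(\mathbf{e}')$ for $k$ odd.

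I expect the only delicate point to be the bookkeeping in the last step: correctly translating the annihilation condition that defines the vacuum through the basis swap — remembering that interchanging $e_i\leftrightarrow e_{n+i}$ interchanges the roles of wedge and contraction for those indices — and then reading off the parity $\#(I)=k$. Everything else is formal. (As an alternative avoiding Clifford modules altogether: the relabelling $\mathbf{e}\mapsto\mathbf{e}'$ leaves the diagonal torus $T$ unchanged and merely replaces the coordinates $\varepsilon_i$ by $-\varepsilon_i$ for $i\in I$; a direct computation then shows that the sign product which distinguishes the $T$-weights of $\bigwedge^{\mathrm{even}}W_1'$ from those of $\bigwedge^{\mathrm{odd}}W_1'$ gets multiplied by $(-1)^{k}$ relative to the one for $W_1$, giving the same conclusion.)
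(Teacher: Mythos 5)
Your proof is correct, but it runs along a genuinely different line from the paper's. The paper argues entirely with root data: the relabelling $\mathbf{e}\mapsto\mathbf{e}'$ acts on the weight lattice by the sign change $\varepsilon_i\mapsto-\varepsilon_i$ for $i\in I$, which lies in the Weyl group of $D_n$ exactly when $k$ is even and otherwise realizes the diagram automorphism exchanging $\varpi_{n-1}$ and $\varpi_n$; comparing weight sets of $S^\pm(\mathbf{e})$ and $S^\pm(\mathbf{e}')$ then gives the lemma (your closing parenthetical is essentially this argument in sketch form). You instead work at the level of Clifford modules: since $C(W,b)\cong\End(\bigwedge^\bullet W_1)\cong\End(\bigwedge^\bullet W_1')$ is a matrix algebra over $E$, the two Fock models are isomorphic as $C(W,b)$-modules, the intertwiner $\Phi$ is automatically $\Spin(W,b)$-equivariant, and because $S^+(\mathbf{e}')\not\cong S^-(\mathbf{e}')$ it must match the half-spin summands one way or the other; tracking the vacuum line, characterized as the common kernel of the operators by which $e_{n+1},\dots,e_{2n}$ act, correctly lands it on $E\cdot\bigwedge_{i\in I}e_i'$ in degree $k$, which settles the matching. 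Both arguments are sound; the paper's is shorter given familiarity with the $D_n$ weight lattice and makes the inner-versus-outer dichotomy explicit, while yours is more self-contained, produces the explicit intertwiner, and makes the parity mechanism concrete. Two small remarks: the non-isomorphy of $S^+(\mathbf{e}')$ and $S^-(\mathbf{e}')$ does not need $n=2m$ even (the highest weights $\varpi_{n-1}\neq\varpi_n$ differ for every $n$), so your argument in fact covers the lemma as stated for general $n$; and it is worth noting explicitly that $\mathbf{e}'$ is still a good basis for the rescaled $b$ (i.e.\ $b(e_i',e_{n+i}')=1$), so the identification $C(W,b)\cong\End(\bigwedge^\bullet W_1')$ is available verbatim.
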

\begin{proof}
Let $\sX(S^\pm(\mathbf{e}))$ denote  the set of weights for $S^\pm$, and similarly for $\sX(S^\pm(\mathbf{e}'))$.  Clearly $\sX(S^\pm(\mathbf{e}')) = \varphi(\sX(S^\pm(\mathbf{e})))$, where $\varphi$ is the isometry of the weight lattice given by switching $\varepsilon_i$ to $-\varepsilon_i$ for $i\in I$, with a total of $k$ sign changes. Hence, if   $W$ is the Weyl group of $D_n$, then  $\varphi\in W$   $\iff$ $k$ is even. Moreover, if $\varphi\notin W$, then the automorphism of the Dynkin diagram corresponding to $\varphi$ exchanges $\alpha_{n-1}$ and $\alpha_n$, hence $\varpi_{n-1}$ and $\varpi_n$. It follows that, if $k$ is even, then $\sX(S^\pm(\mathbf{e}')) = \sX(S^\pm(\mathbf{e}))$ and hence $S^\pm(\mathbf{e}') \cong S^\pm(\mathbf{e})$, whereas if $k$ is odd then $\sX(S^\pm(\mathbf{e}')) = \sX(S^\mp(\mathbf{e}))$ and hence  $S^\pm(\mathbf{e}') \cong S^\mp(\mathbf{e})$. 
\end{proof}

We return  to the general situation of a compatible $E$-bilinear form $b$ and an $(E, E_0)$-Hermitian form $\psi$, with notation as in Section 1, Definition~\ref{goodcomp}.  

\begin{lemma} Let $G_1$ be the neutral component of the preimage  in $\Spin(W,b)$ of the group $G(W, b, \psi)$ which stabilizes both $b$ and $\psi$ in the group of units of $C^{\text{\rm{even}}}(W,b)$. Then $G_1$ is an algebraic group defined over $E_0$.
\end{lemma}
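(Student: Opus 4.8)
The plan is to realize $G_1$ as a closed $E_0$-subgroup of a Weil restriction, exactly parallel to the way $G_0 = G(W,b,\psi)$ was handled in Section 1. First I would recall from the discussion preceding Proposition~\ref{standardrep} that $G_0$ is \emph{not} a subgroup of the $E$-group $\SO(W,b)$, but rather a closed $E_0$-subgroup of $\Res_{E/E_0}\SO(W,b)$, namely its intersection with $\SU(W,\psi)$; the relevant point is that ``preserving $\psi$'' is an $E_0$-algebraic, not $E$-algebraic, condition. Correspondingly I would work not with $\Spin(W,b)$ itself (an $E$-group), but with $\Res_{E/E_0}\Spin(W,b)$, which is an algebraic group over $E_0$, and which via $\Spin(W,b)\subset C^{\mathrm{even}}(W,b)^\times$ is a closed $E_0$-subgroup of the unit group of the finite-dimensional $E_0$-algebra $\Res_{E/E_0}C^{\mathrm{even}}(W,b)$.

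Next, the vector representation $\rho\colon \Spin(W,b)\to \SO(W,b)$ (conjugation of $\Spin(W,b)$ on $W\subset C(W,b)$) is a morphism of algebraic groups over $E$, so applying $\Res_{E/E_0}$ yields a morphism of $E_0$-groups $\Res_{E/E_0}\rho\colon \Res_{E/E_0}\Spin(W,b)\to \Res_{E/E_0}\SO(W,b)$. The scheme-theoretic preimage $\widetilde G_0 := (\Res_{E/E_0}\rho)^{-1}(G_0)$ of the closed $E_0$-subgroup $G_0$ is then a closed $E_0$-subgroup of $\Res_{E/E_0}\Spin(W,b)$; on points it is exactly the set of elements of $\Spin(W,b)$ whose image in $\SO(W,b)$ lies in $G_0$, and since any element of $\Spin(W,b)$ already preserves $b$, this is precisely the preimage of $G(W,b,\psi)$ in $\Spin(W,b)\subset C^{\mathrm{even}}(W,b)^\times$ appearing in the statement. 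Finally, the identity component of a group scheme of finite type over a field is a closed subgroup defined over that field, so $G_1 := \widetilde G_0^{\,\circ}$ is an algebraic group over $E_0$, as claimed.

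I do not expect a genuine obstacle here: the substance is just the functoriality of restriction of scalars, together with the facts that scheme-theoretic preimages under $E_0$-morphisms and identity components remain defined over $E_0$. The only thing that needs care is the bookkeeping of what is defined over $E$ versus over $E_0$ — in particular that one must pass to $\Res_{E/E_0}\Spin(W,b)$ before imposing the condition coming from $\psi$, mirroring the treatment of $G_0$ — and possibly a word confirming that the resulting $E_0$-structure is compatible with the realization of $G_1$ inside $C^{\mathrm{even}}(W,b)^\times$ used in the statement. One could instead try to exhibit the $E_0$-structure directly as a Galois descent datum for $E/E_0$ built from $J$ and naive conjugation on $C^{\mathrm{even}}(W,b)$, but because $\lambda$ need not be a norm the lift of $J$ to $C^{\mathrm{even}}(W,b)$ is delicate, so the Weil-restriction argument is both cleaner and entirely sufficient for this lemma.
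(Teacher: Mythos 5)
Your proposal is correct and follows essentially the same route as the paper: apply $\Res_{E/E_0}$ to the covering homomorphism $\rho\colon \Spin(W,b)\to\SO(W,b)$, take the preimage of the $E_0$-group cut out by $\psi$ (the paper uses $\SU(W,\psi)$, you use $G_0$, which gives the same subgroup since elements of $\Spin(W,b)$ already preserve $b$), and pass to the neutral component, which stays defined over $E_0$. Your remarks on scheme-theoretic preimages and identity components just make explicit what the paper leaves implicit.
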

\begin{proof} Let   $g\in \Spin(W,b)$. Then  $gWg^{-1} = W$, and, if we define $\rho(g) = gwg^{-1}$, then $\rho$ is the double cover homomorphism from $\Spin(W,b)$ to $\SO(W,b)$. There is thus an induced homomorphism, also denoted by $\rho$, from $\Res_{E/E_0}\Spin(W,b)$ to $\Res_{E/E_0}\SO(W,b)$.
The argument  that $G_1$ is defined over $E_0$ is then similar to the discussion in Section 1 for the group $G(W, b, \psi)$: By definition, $G_1$ is the inverse image $\rho^{-1}(\SU(W, \psi))$ in $\Res_{E/E_0}\Spin(W,b)$ of  $\SU(W, \psi)$, which is defined over $E_0$. Hence $G_1$ is defined over $E_0$.
\end{proof} 

With this said, we can now state the main theorem of this section as follows:

\begin{theorem}\label{allcases} Suppose that $n = 2m$ is even. Let $b$ and $\psi$ be compatible, let $W_1$ be a good isotropic subspace of $W$, and let  $\lambda= a_ia_{n+i}$ be as in Lemma \ref{lemlambda} and $D  = \det (\psi|W_1)= a_1 \cdots a_n$ be as in Definition~\ref{goodcomp}. Then:
\begin{enumerate}
\item[\rm(i)] There exists a conjugate linear operator $L_+ \in \End_{E_0[G_1]}\Res_{E/E_0}S^+$ such that $(L_+)^2 = (-1)^mD\Id$ if $m$ is even and $(L_+)^2 = (-1)^mD\lambda\Id$  if $m$ is odd.
\item[\rm(ii)] There exists a conjugate linear operator $L_- \in \End_{E_0[G_1]}\Res_{E/E_0}S^-$ such that $(L_-)^2 = (-1)^mD\lambda\Id$ if $m$ is even and $(L_-)^2 = (-1)^mD\Id$ if $m$ is odd.
\end{enumerate} 
\end{theorem}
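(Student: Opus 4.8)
The plan is to exhibit $L_+$ and $L_-$ explicitly in the Clifford model and to read off their squares from a direct calculation. Fix the good basis $e_1,\dots ,e_{2n}$, so that $S^{+}=\bigwedge^{\mathrm{even}}W_1$ and $S^{-}=\bigwedge^{\mathrm{odd}}W_1$, with basis $\{e_T\}$ indexed by subsets $T\subseteq\{1,\dots ,n\}$, where $e_T=e_{i_1}\wedge\cdots \wedge e_{i_k}$ for $T=\{i_1<\cdots <i_k\}$; write $T^{c}=\{1,\dots ,n\}\setminus T$ and $a_T=\prod_{i\in T}a_i$. Since $n=2m$ is even, $|T^{c}|\equiv |T|\pmod 2$, so any operator of the form $e_T\mapsto(\text{scalar})\cdot e_{T^{c}}$ respects the splitting $\bigwedge^{\bullet}W_1=S^{+}\oplus S^{-}$. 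I will therefore look for $L_{\pm}$ of the shape $L(e_T)=\mu(T)e_{T^{c}}$, $\mu(T)\in E^{\ast}$, extended conjugate linearly by $L(\sum_T c_Te_T)=\sum_T\bar c_T\,\mu(T)e_{T^{c}}$. (That \emph{some} conjugate linear $G_1$-equivariant operator on each of $\Res_{E/E_0}S^{\pm}$ exists is automatic: by Lemma~\ref{newhalfspins}, applied to the permutation interchanging all $n$ pairs $e_i\leftrightarrow e_{n+i}$ and using that $n$ is even, the conjugate module $\overline{S^{\pm}}$ is isomorphic to $S^{\pm}$ over $E$; I use this only to know that the recursion below is consistent.)

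First I would pin down $\mu$ by forcing $L$ to commute with the $G_1$-action. Since $G_1$ is connected with Lie algebra $\mathfrak{g}_0$, it is enough that $L$ commute on $\bigwedge^{\bullet}W_1$ with each of the five families of $E_0$-basis vectors of $\mathfrak{g}_0$ listed in Lemma~\ref{Liealg}, acting through the formulas of \cite[p.~305]{fultonharris}: $X_{ij}$ ($i,j\le n$) acts as the raising operator $s\mapsto e_i\wedge e_j\wedge s$, $X_{n+i,n+j}$ as the lowering operator built from the contractions $\iota(e_i^{\ast})$, and the $X_{i,n+j}$ ($i\ne j$) and $X_{i,n+i}$ as degree‑preserving operators. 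Matching coefficients on the $e_T$ turns the commutation conditions into a recursion for $\mu$: the generators $a_{n+i}X_{ij}\pm a_jX_{n+i,n+j}$ force $\mu(T)$ to be multiplied by $a_j/a_{n+i}=a_ia_j/\lambda$, up to an explicit permutation sign, when a pair $\{i,j\}$ is adjoined to $T$, while the generators involving $X_{i,n+j}$ force $\mu(T)/a_T$ to depend on $T$, up to sign, only through $|T|$; the generators $\alpha X_{i,n+i}$ impose no further condition. It is here that conjugate linearity of $L$ together with $\bar\alpha=-\alpha$ is essential: the pieces $\alpha(a_{n+i}X_{ij}\mp a_jX_{n+i,n+j})$ and $a_{n+i}X_{ij}\pm a_jX_{n+i,n+j}$ give compatible equations only because passing $L$ past $\alpha$ reverses its sign. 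Thus, up to one overall scalar that I am free to choose, there is exactly one such $L$ on $S^{+}$ and one on $S^{-}$, and these are $L_{\pm}$.

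The square is then immediate: $L^{2}(e_T)=\overline{\mu(T)}\,\mu(T^{c})\,e_T$, and since the $a_i$ and all the signs are real this equals $a_Ta_{T^{c}}\cdot(\text{power of }\lambda)\cdot(\text{sign})\cdot e_T$, i.e.\ $D\cdot(\text{power of }\lambda)\cdot(\text{sign})\cdot e_T$ because $a_Ta_{T^{c}}=a_1\cdots a_n=D$. Tracing the recursion from $T=\varnothing$ on $S^{+}$ (which reaches $\{1,\dots ,n\}$ after adjoining $m$ pairs, each contributing a factor $\lambda^{-1}$) gives the power $\lambda^{-m}$ there, while starting from a one‑element set on $S^{-}$ gives $\lambda^{-(m-1)}$; this interchange of $\lambda^{-m}$ and $\lambda^{-(m-1)}$ between $S^{+}$ and $S^{-}$ is exactly the difference between (i) and (ii). Since $\lambda^{-m}\equiv \lambda^{m}\pmod{\Nm_{E/E_0}(E^{\ast})}$, which is a norm when $m$ is even and is $\equiv\lambda$ when $m$ is odd, one already obtains the stated dichotomy up to norms; and choosing the normalizing scalar to be the appropriate power of $\lambda$ (so that its norm cancels the unwanted power of $\lambda$) produces the exact values $(-1)^{m}D$, $(-1)^{m}D\lambda$, etc., once the residual global sign is computed.

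The main obstacle is the bookkeeping of signs and normalization. Concretely one must: (a) transcribe the $\mathfrak{so}(2n)$‑action on $S^{\pm}$ from \cite{fultonharris} into the $\{e_T\}$‑basis with every sign correct, as all the recursion coefficients come from there; (b) verify that the recursion for $\mu(T)$ is consistent, i.e.\ that building a given $T$ by adjoining pairs in different orders, together with the moves from the degree‑preserving generators, all yield the same $\mu(T)$ — this is where the signs $\sigma(T)=\pm1$ in $\mu(T)=a_T\cdot(\text{power of }\lambda)\cdot\sigma(T)$ get fixed; and (c) check that $\sigma(T)\sigma(T^{c})$, times the product of the signs accumulated along the recursion, is genuinely independent of $T$ and equals the asserted $(-1)^{m}$ after the chosen normalization. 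Once $L_{\pm}$ and $(L_{\pm})^{2}$ are in hand, deciding definability over $E_0$ of $S^{\pm}$ is a norm condition on $(L_{\pm})^{2}$, exactly as in Proposition~\ref{standardrep}.
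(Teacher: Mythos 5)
Your strategy is in substance the paper's: work in the model $S^{\pm}=\bigwedge^{\mathrm{even/odd}}W_1$, reduce equivariance to the $E_0$-basis of $\mathfrak g_0$ from Lemma~\ref{Liealg} (using connectedness of $G_1$), and take for $L_{\pm}$ an operator sending each $e_T$ to a multiple of $e_{T^c}$. Indeed your ansatz $L(e_T)=\mu(T)e_{T^c}$ is exactly the paper's operator in disguise: there $L_{\pm}$ is defined outright as $(-1)^d\lambda^{-d}\,\star$ on $\bigwedge^{k}W_1$ with $d=\left[\frac{k-m}{2}\right]$, where $\star$ is the conjugate-linear Hodge star of $\psi|W_1$, so $\mu(T)=\pm(-1)^d\lambda^{-d}a_T$; the identity $\star\,\star=(-1)^{k(n-k)}D\,\Id$ then gives the squares immediately, and equivariance is checked against the same Lie-algebra basis via the commutation formulas $\star\,\ell(e_i)=(-1)^k a_i\,\iota(e_{n+i})\,\star$ and $\star\,\iota(e_{n+i})=(-1)^{k+n+1}\lambda^{-1}a_{n+i}\,\ell(e_i)\,\star$ (Lemma~\ref{starcomps}, Corollary~\ref{morestarcomps}). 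Your preliminary observations are sound (the generators $\alpha X_{i,n+i}$ are automatically compatible with a conjugate-linear $L$ of this shape; pairing $a_{n+i}X_{ij}\pm a_jX_{n+i,n+j}$ with its $\alpha$-twist turns commutation into intertwining of $\ell(e_i)\ell(e_j)$ with $\iota(e_{n+i})\iota(e_{n+j})$), and the soft argument via $\overline{S^{\pm}}\cong S^{\pm}$ does give a nonzero conjugate-linear equivariant $L_{\pm}$ with $(L_{\pm})^2$ a scalar in $E_0^{*}$.

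The gap is that the items you defer as ``bookkeeping'' (a)--(c) are the actual content of the theorem. The soft existence argument only shows $(L_{\pm})^2=c_{\pm}\Id$ for some $c_{\pm}\in E_0^{*}$ well defined modulo $\Nm_{E/E_0}(E^{*})$; it cannot identify $c_{\pm}$. Your recursion correctly predicts the powers of $\lambda$ (hence the $D$ versus $D\lambda$ dichotomy modulo norms), but you never verify that the recursion is consistent, i.e.\ that a $G_1$-equivariant solution with those coefficients actually exists (this is equivalent to the equivariance computation the paper performs explicitly), and you never compute the residual sign, which is exactly the factor $(-1)^m$ coming from $\star\,\star=(-1)^{k(n-k)}D$. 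That sign is not a cosmetic normalization: rescaling $L_{\pm}$ only changes $c_{\pm}$ by norms, so no choice of scalar can convert $-D$ into $D$ when $-1$ is not a norm, and the sign is precisely what Corollaries~\ref{criterion}, \ref{usedcor} and \ref{realcase} hinge on (e.g.\ it decides which of $S^{\pm}$ is real for $\SO^{*}(4m)$). To close the gap you would either carry out your steps (a)--(c) in full, or, more efficiently, define $L_{\pm}$ via the Hodge star as the paper does, quote $\star\,\star=(-1)^{k(n-k)}D\,\Id$, and verify commutation with the Lemma~\ref{Liealg} basis using the two star-commutation identities above.
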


Using the theorem, we can completely describe when the representations $S^\pm$ are defined over $E_0$:

\begin{corollary}\label{criterion} With  notation and hypotheses as above,
\begin{enumerate}
\item[\rm(i)] Both of the representations $S^+$ and $S^-$ can be defined over $E_0$ $\iff$  $(-1)^mD$  and  $(-1)^mD\lambda$ are norms, i.e.\ lie  in $\Nm_{E/E_0}(E^*) \subseteq E_0^*$.
\item[\rm(ii)] The representation  $S^+$ can be defined over $E_0$ and  $S^-$ cannot be defined over $E_0$ $\iff$ either $m$ is even,   $(-1)^mD$   is a  norm and $(-1)^mD\lambda$ is not a norm, or $m$ is odd,  $(-1)^mD$ is not a norm and  $(-1)^mD\lambda$ is a  norm.
\item[\rm(iii)] The representation  $S^-$ can be defined over $E_0$ and  $S^+$ cannot be defined over $E_0$ $\iff$ either $m$ is odd,   $(-1)^mD$   is a  norm and $(-1)^mD\lambda$ is not a norm, or $m$ is even,  $(-1)^mD$ is not a norm and  $(-1)^mD\lambda$ is a  norm.
\item[\rm(iv)] Neither of the representations  $S^+$, $S^-$ can be defined over $E_0$ $\iff$ neither $(-1)^mD$  nor  $(-1)^mD\lambda$ are norms.
\end{enumerate}
\end{corollary}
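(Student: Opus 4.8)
The plan is to deduce Corollary~\ref{criterion} directly from Theorem~\ref{allcases} by a descent argument of the same shape as the proof of Proposition~\ref{standardrep}. First I would pin down the endomorphism algebra. By Theorem~\ref{allcases} there are conjugate linear operators $L_\pm \in \End_{E_0[G_1]}\Res_{E/E_0}S^\pm$ with $(L_\pm)^2 = c_\pm\,\Id$ for explicit $c_\pm \in E_0^*$; since $D\neq 0$ and $\lambda\neq 0$ we have $c_\pm\neq 0$, so $L_\pm$ is invertible. An invertible conjugate linear $G_1$-equivariant endomorphism of $S^\pm$ amounts to an isomorphism $\overline{S^\pm}\cong S^\pm$ of $E[G_1]$-modules, so $\overline{S^\pm}\cong S^\pm$ and hence $\Res_{E/E_0}S^\pm\otimes_{E_0}E \cong S^\pm\oplus\overline{S^\pm}\cong (S^\pm)^{\oplus 2}$. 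Just as for the standard representation, the half spin representations are absolutely irreducible and carry nondegenerate invariant forms, so $\End_{E[G_1]}S^\pm = E$ and $\End_{E[G_1]}(S^\pm\oplus\overline{S^\pm})\cong\mathbb{M}_2(E)$; therefore $\dim_{E_0}\End_{E_0[G_1]}\Res_{E/E_0}S^\pm = 4$, and since $E[L_\pm] = E\oplus E L_\pm$ is already a $4$-dimensional $E_0$-subalgebra it follows that $\End_{E_0[G_1]}\Res_{E/E_0}S^\pm = E[L_\pm]$, just as in the Remark following Proposition~\ref{standardrep}.

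Next I would reduce to a norm condition. Arguing as in the proof of Proposition~\ref{standardrep}, every element of $E[L_\pm]$ is uniquely $\alpha+\beta L_\pm$ with $\alpha,\beta\in E$, and using $L_\pm\bar\gamma = \gamma L_\pm$ for $\gamma\in E$ together with $(L_\pm)^2 = c_\pm$ one computes
\[
(\alpha+\beta L_\pm)(\bar\alpha - \beta L_\pm) = \Nm_{E/E_0}(\alpha) - c_\pm\,\Nm_{E/E_0}(\beta).
\]
Hence $E[L_\pm]$ is a division algebra when $c_\pm\notin\Nm_{E/E_0}(E^*)$ and is isomorphic to $\mathbb{M}_2(E_0)$ when $c_\pm\in\Nm_{E/E_0}(E^*)$. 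Consequently, again as in Proposition~\ref{standardrep}, $S^\pm$ can be defined over $E_0$ $\iff$ $\Res_{E/E_0}S^\pm$ is a reducible $E_0[G_1]$-module $\iff$ $\End_{E_0[G_1]}\Res_{E/E_0}S^\pm = E[L_\pm]$ is not a division algebra $\iff$ $c_\pm\in\Nm_{E/E_0}(E^*)$.

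Finally I would read off the four cases by inserting the values of $c_\pm$ from Theorem~\ref{allcases}. Put $\mu = (-1)^mD$ and $\nu = (-1)^mD\lambda$. The theorem gives $c_+ = \mu$, $c_- = \nu$ when $m$ is even and $c_+ = \nu$, $c_- = \mu$ when $m$ is odd; in either parity $\{c_+,c_-\} = \{\mu,\nu\}$. Since, by the previous step, ``$S^+$ (resp.\ $S^-$) can be defined over $E_0$'' is equivalent to ``$c_+$ (resp.\ $c_-$) is a norm from $E$'', substituting these parity-dependent identifications yields (i)--(iv) exactly: both can be defined over $E_0$ precisely when $\mu$ and $\nu$ are both norms, which is (i); $S^+$ can and $S^-$ cannot precisely when ($m$ even, $\mu$ a norm and $\nu$ not) or ($m$ odd, $\nu$ a norm and $\mu$ not), which is (ii); interchanging $S^+$ with $S^-$ (equivalently $\mu$ with $\nu$) gives (iii); and neither can precisely when neither $\mu$ nor $\nu$ is a norm, which is (iv). I do not expect a genuine obstacle here, since all the representation-theoretic content sits in Theorem~\ref{allcases}; what remains is just the ``matrix algebra versus division algebra $\iff$ norm versus non-norm'' dichotomy already used for the standard representation, together with the parity bookkeeping of the last step.
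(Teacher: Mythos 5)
Your proposal is correct and follows essentially the same route as the paper, which simply invokes the argument of Proposition~\ref{standardrep} to identify $\End_{E_0[G_1]}\Res_{E/E_0}S^\pm = E[L_\pm]$, deduces that $S^\pm$ is defined over $E_0$ exactly when $(L_\pm)^2 = c_\pm\Id$ with $c_\pm$ a norm, and then reads off the four cases from Theorem~\ref{allcases}; your write-up just makes the dimension count, the norm computation $(\alpha+\beta L_\pm)(\bar\alpha-\beta L_\pm)=\Nm_{E/E_0}(\alpha)-c_\pm\Nm_{E/E_0}(\beta)$, and the parity bookkeeping explicit. The details (using invertibility of $L_\pm$ to get $\overline{S^\pm}\cong S^\pm$, and the identification $\{c_+,c_-\}=\{(-1)^mD,(-1)^mD\lambda\}$ with the parity-dependent assignment) are all accurate.
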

\begin{proof} The argument of Proposition~\ref{standardrep} shows that $\End_{E_0[G_1]}\Res_{E/E_0}S^+=E[L_+]$ and that $\End_{E_0[G_1]}\Res_{E/E_0}S^-=E[L_-]$, and moreover that $S^\pm$ can be defined over $E_0$ $\iff$ $(L_\pm)^2 =c_\pm\Id$, where $c_\pm\in E_0$ is a norm. The various cases of the corollary then follow from the cases in Theorem~\ref{allcases}.
\end{proof}

For the next two corollaries, we shall apply Theorem~\ref{allcases} to the forms $\psi_{\delta, k}$ of the preceding section. In this case, there is a fixed good basis $\mathbf{e} = e_1, \dots, e_{2n}$ for which the form $\psi_{\delta, k}$ is given by Definition~\ref{defpsi}.  The half spin representations $S^\pm$ will always mean the representations $S^\pm(\mathbf{e})$ with respect to this basis. Then we have the following, which includes the final piece of Theorem~\ref{mainthm2}, the rationality statement for the representation $S^+$:

\begin{corollary}\label{usedcor} Suppose that $G_1 = \widetilde{G}_{\delta, k}$ is the spin double cover of the group $G_{\delta, k}$ corresponding to the form $\psi = \psi_{\delta, k}$ and that $n = 2m$ with $m\equiv k \pmod 2$. Then the $E[G_1]$-module $S^+$ is the extension to $E$ of an $E_0[G_1]$-module.  

In particular, if $k=1$ and   $n= 2m = 6$, then the representation $S^+$ of $G_1 = \widetilde{G}_{\delta, 1}$ is defined over $E_0$.
\end{corollary}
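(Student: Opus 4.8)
The plan is to deduce the corollary directly from Theorem~\ref{allcases} together with the reducibility criterion recorded in the proof of Corollary~\ref{criterion}. Recall from there that $\End_{E_0[G_1]}\Res_{E/E_0}S^+ = E[L_+]$, and that $S^+$ is the extension of scalars to $E$ of an $E_0[G_1]$-module precisely when $(L_+)^2 = c_+\,\Id$ for some $c_+ \in E_0^*$ that is a norm from $E$. So the entire task reduces to computing $(L_+)^2$ for the forms $\psi_{\delta,k}$ and checking that the resulting constant lies in $\Nm_{E/E_0}(E^*)$.

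To carry this out I would first record the invariants attached to $\psi_{\delta,k}$ relative to the fixed good basis $\mathbf{e}$ and the good isotropic subspace $W_1 = \operatorname{span}\{e_1,\dots,e_n\}$: by Definition~\ref{defpsi} one has $\lambda = a_ia_{n+i} = -\delta$, and the computation immediately following that definition gives $D = \det(\psi_{\delta,k}|W_1) = a_1\cdots a_n = \delta^k$. Feeding these into Theorem~\ref{allcases}(i) yields a conjugate linear $L_+ \in \End_{E_0[G_1]}\Res_{E/E_0}S^+$ with
\begin{align*}
(L_+)^2 &= (-1)^m D\,\Id = (-1)^m\delta^k\,\Id \qquad\text{($m$ even)},\\
(L_+)^2 &= (-1)^m D\lambda\,\Id = (-1)^{m+1}\delta^{k+1}\,\Id \qquad\text{($m$ odd)}.
\end{align*}
Now I would invoke the hypothesis $m \equiv k \pmod 2$. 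If $m$ is even then $k$ is even and $(-1)^m = 1$, so $(L_+)^2 = \delta^k\,\Id = \bigl(\delta^{k/2}\bigr)^2\,\Id$; if $m$ is odd then $k$ is odd, hence $k+1$ is even and $(-1)^{m+1} = 1$, so $(L_+)^2 = \delta^{k+1}\,\Id = \bigl(\delta^{(k+1)/2}\bigr)^2\,\Id$. In both cases $(L_+)^2 = c_+\,\Id$ with $c_+$ a square in $E_0^*$, and since every $\alpha \in E_0^*$ satisfies $\Nm_{E/E_0}(\alpha) = \alpha\bar\alpha = \alpha^2$, the constant $c_+$ is a norm.

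Applying the criterion from the proof of Corollary~\ref{criterion} then shows $E[L_+] \cong \mathbb{M}_2(E_0)$, equivalently $\Res_{E/E_0}S^+$ is reducible, so $S^+$ descends to an $E_0[G_1]$-module; this proves the general assertion. For the special case I would just check the parity hypothesis: with $k=1$ and $n = 2m = 6$ we have $m = 3$, which is odd, and $k=1$ is odd, so $m \equiv k \pmod 2$, and the general statement applies to $G_1 = \widetilde{G}_{\delta,1}$. I do not expect a genuine obstacle here: the real content is Theorem~\ref{allcases}, and the remaining work is only the bookkeeping of the sign $(-1)^m$ and the parity of $k$, together with the elementary observation that squares in $E_0^*$ are automatically norms from the imaginary quadratic extension $E$.
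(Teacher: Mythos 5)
Your argument is correct, and it is a genuinely more direct route than the one the paper takes. You apply Theorem~\ref{allcases} (via the criterion extracted in the proof of Corollary~\ref{criterion}) in the original good basis $\mathbf{e}$, where Definition~\ref{defpsi} gives $\lambda=-\delta$ and $D=\det(\psi_{\delta,k}|W_1)=\delta^k$, and you simply observe that the hypothesis $m\equiv k\pmod 2$ makes the relevant constant $(L_+)^2$ equal to $\delta^k$ (for $m$ even) or $\delta^{k+1}$ (for $m$ odd), in either case a square in $E_0^*$ and hence a norm from $E$; since the $S^\pm$ of the corollary are by convention exactly $S^\pm(\mathbf{e})$, no further translation is needed. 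The paper instead first replaces $\mathbf{e}$ by a permuted good basis $\mathbf{e}'$ as in Lemma~\ref{chooseW1} so that the determinant becomes $(-1)^k\delta^{2N}$ and $(-1)^mD'$ is visibly a square, concludes that $S^+(\mathbf{e}')$ (for $m$ even) or $S^-(\mathbf{e}')$ (for $m$ odd) descends, and then uses Lemma~\ref{newhalfspins}, together with the parity count $t+s\equiv k\equiv m\pmod 2$ of switched basis vectors, to identify that representation with $S^+(\mathbf{e})$. The two arguments prove the same thing; yours is shorter and avoids the basis-switching bookkeeping entirely, while the paper's normalization makes the statement fit the "$(-1)^mD$ is a norm" formulation of Corollary~\ref{criterion} and illustrates the consistency mechanism (switching basis vectors changes $D$ by $\lambda$ but also interchanges $S^+$ and $S^-$) that is discussed in the remark following Corollary~\ref{realcase}. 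The only points worth being careful about, and which you do handle, are that the criterion $(L_+)^2=c_+\Id$ with $c_+$ a norm comes from the argument of Proposition~\ref{standardrep}, and that a square in $E_0^*$ is automatically a norm because $\Nm_{E/E_0}(\alpha)=\alpha^2$ for $\alpha\in E_0$.
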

\begin{proof} For the form $\psi_{\delta, k}$, by a  choice of good basis $\mathbf{e}'$ and isotropic subspace $W_1'$ as in Lemma~\ref{chooseW1}, we can assume that $D = (-1)^k\delta^{2N}$. Since $m\equiv k \pmod 2$, 
$$(-1)^mD = (-1)^m(-1)^k\delta^{2N} = \delta^{2N}$$ is a square and hence a norm. Thus,   for $m$ even, $S^+(\mathbf{e}')$ can be defined over $E_0$, and,  for $m$ odd, $S^-(\mathbf{e}')$ can be defined over $E_0$. In the notation of the discussion prior to Lemma~\ref{chooseW1}, the total number of basis vectors switched is $t+s$, and moreover $t+s \equiv k\equiv m\pmod 2$. Hence, by Lemma~\ref{newhalfspins}, if $m$ is even then $S^+(\mathbf{e}') \cong  S^+(\mathbf{e})$ and if $m$ is odd then $S^-(\mathbf{e}') \cong  S^+(\mathbf{e})$. In all cases $S^+(\mathbf{e}) = S^+$ is defined over $E_0$.
\end{proof}

 In case $E_0 =\R$, we have the following, which is a special case of the criterion of \cite[Theorem IV.E.4]{mtbook} (see also \cite[(26.27)]{fultonharris} for the cases $k=0, n$ in (ii)).   
 
\begin{corollary}\label{realcase} Let $E_0 =\R$ and let $n=2m$.
\begin{enumerate}
\item[\rm(i)] For the spin double cover of $\SO^*(4m)$,   $S^+$ can be defined over $\R$ and $S^-$ cannot be defined over $\R$.  
\item[\rm(ii)] For the spin double cover of $\SO(2n-2k,2k)$,   if $k\equiv m \pmod 2$, then both $S^+$ and $S^-$ as well as the standard representation are defined over $\R$, while if $k \not\equiv m \pmod 2$, then neither $S^+$ nor $S^-$ is defined over $\R$.
\end{enumerate}
\end{corollary}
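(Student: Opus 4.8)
The plan is to deduce Corollary~\ref{realcase} directly from Corollary~\ref{criterion} by specializing to the case $E_0 = \R$, $E = \Cee$. The only extra input is the elementary fact that $\Nm_{\Cee/\R}(\Cee^*) = \R_{>0}$, so that a nonzero real number is a norm precisely when it is positive; hence the whole matter reduces to reading off the signs of $(-1)^m D$ and $(-1)^m D\lambda$ for a concrete Hermitian form realizing each of the two families of real groups. So the first step is to choose such models. For part (i) take $\psi = \psi_0$, of signature $(n,n)$, so that $G_{1,k} \cong \SO^*(2n) = \SO^*(4m)$ with $G_1$ its spin double cover (as identified in Section~2); for part (ii) take $\psi = \psi_{-1,k}$, of signature $(2n-2k,2k)$, so that $G_{-1,k} \cong \SO(2n-2k,2k)$ with $G_1$ its spin double cover. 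In both cases keep the standard good basis $\mathbf{e} = e_1,\dots,e_{2n}$ and the standard good isotropic subspace $W_1 = \operatorname{span}\{e_1,\dots,e_n\}$, so that $S^\pm$ always denotes $S^\pm(\mathbf{e})$.

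For part (i): with $\psi = \psi_0$ and this $W_1$ one has $\lambda = -1$ and $D = a_1\cdots a_n = 1$, hence $(-1)^m D = (-1)^m$ and $(-1)^m D\lambda = (-1)^{m+1}$ are of opposite sign, so exactly one of them is a norm: for $m$ even it is $(-1)^m D$, and for $m$ odd it is $(-1)^m D\lambda$. In either parity this is one of the two alternatives listed in case (ii) of Corollary~\ref{criterion}, which therefore yields that $S^+$ can be defined over $\R$ and $S^-$ cannot.

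For part (ii): with $\psi = \psi_{-1,k}$ one has $\lambda = 1$ and $D = a_1\cdots a_n = (-1)^k$, hence $(-1)^m D = (-1)^m D\lambda = (-1)^{m+k}$. When $k \equiv m \pmod 2$ both are equal to $1$, hence are norms, and Corollary~\ref{criterion}(i) gives that both $S^+$ and $S^-$ are defined over $\R$; for the standard representation $W$, which factors through $G_{-1,k}$, one observes that $\lambda = 1 = \Nm_{\Cee/\R}(1)$ is a norm and applies Proposition~\ref{standardrep} (in its evident form for the double cover). When $k \not\equiv m \pmod 2$ both are equal to $-1$, hence neither is a norm, and Corollary~\ref{criterion}(iv) gives that neither $S^+$ nor $S^-$ can be defined over $\R$.

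I do not expect a real obstacle, since the substance is already contained in Theorem~\ref{allcases} and Corollary~\ref{criterion}; what remains is bookkeeping. The one point needing a little care is the consistency of the labels $S^+$ and $S^-$: the two half spin representations of the abstract group are distinguished only after a maximal isotropic subspace is fixed, and replacing $W_1$ by one of the switched subspaces of Lemma~\ref{chooseW1} with an odd number of switches both flips the sign of $D$ and, by Lemma~\ref{newhalfspins}, interchanges $S^+$ with $S^-$. Pinning down the standard $W_1$ once and for all keeps these two sign changes from being conflated; alternatively one could propagate the switching argument exactly as in the proof of Corollary~\ref{usedcor} and reach the same answer.
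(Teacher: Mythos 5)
Your proof is correct and takes essentially the same route as the paper: specialize Corollary~\ref{criterion} to the explicit forms $\psi_0$ and $\psi_{-1,k}$, using $\Nm_{\Cee/\R}(\Cee^*)=\R_{>0}$ to read off which of $(-1)^mD$, $(-1)^mD\lambda$ are norms. The only divergence is in (i), where the paper first switches $m$ basis vectors (making $D=(-1)^m$) and then untangles the $S^\pm$ labels via Lemma~\ref{newhalfspins}, whereas you stay in the standard good basis with $D=1$, $\lambda=-1$ and note that both parities of $m$ land in case (ii) of Corollary~\ref{criterion} — a legitimate minor simplification that gives the same conclusion.
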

\begin{proof} Both cases are reduced to Corollary~\ref{criterion}: (i) In this case, $\psi = \psi_0$ and   $\lambda = -1$. Beginning with the good  basis $\mathbf{e}=e_1, \dots, e_{2n}$, by Lemma~\ref{chooseW1}, if $\mathbf{e}'$ is the good  basis obtained by switching $m$ of the $e_i$ to $e_{n+i}$, then $D=(-1)^m$ in the new basis. If $m$ is even, by Lemma~\ref{newhalfspins},   $S^\pm(\mathbf{e}') = S^\pm(\mathbf{e})$, but for $m$ odd $S^\pm(\mathbf{e}') = S^\mp(\mathbf{e})$. By Corollary~\ref{criterion}, as $\lambda = -1$ and $(-1)^mD = 1$, if $m$ is even then $S^+(\mathbf{e}')$ can be defined over $\R$ and $S^-(\mathbf{e}')$ cannot be defined over $\R$, whereas if $m$ is odd then $S^-(\mathbf{e}')$ can be defined over $\R$ and $S^+(\mathbf{e}')$ cannot be defined over $\R$. Thus, in all cases,   $S^+(\mathbf{e})$ can be defined over $\R$ and $S^-(\mathbf{e})$ cannot be defined over $\R$. 

\smallskip
\noindent (ii) In this case, $\psi = \psi_{-1,k}$,  $\lambda = 1$, and $D = (-1)^k$, hence $(-1)^mD = (-1)^{k+m}$. Thus, if $k\equiv m \pmod 2$, then both $(-1)^mD$ and $(-1)^mD\lambda$ are equal to $1$, hence lie in $\Nm_{\Cee/\R}\Cee^*$, and so both $S^+$ and $S^-$ are defined over $\R$. If $k \not\equiv m \pmod 2$, then both $(-1)^mD$ and $(-1)^mD\lambda$ are equal to $-1$, and so neither $S^+$ nor $S^-$ is defined over $\R$.
\end{proof}

  In particular, even if the standard representation $W$ can be defined over $E_0$ (for which Proposition~\ref{standardrep} gives the necessary and sufficient condition that $\lambda$ is a norm), it is not always the case that the half spin representations can be defined over $E_0$. 
  
\begin{remark} The condition that $(-1)^mD$ is a norm, where $D = \det (\psi|W_1)$, is not intrinsically defined: it depends on the choice of a good isotropic subspace. For example, switching $e_1$ and $e_{n+1}$ as in the discussion before Lemma~\ref{chooseW1} replaces $D$ by $D\lambda$ mod $E_0^2$, and it is certainly possible that one of $(-1)^mD$, $(-1)^mD\lambda$ is a norm but the other is not. Thus the condition that $S^+$ can be defined over $E_0$ would appear to depend on the choice of $W_1$. However, the operation of switching $e_1$ and $e_{n+1}$ also switches  $S^+$ and  $S^-$, by Lemma~\ref{newhalfspins}, so the final result is in fact consistent. 
\end{remark}

\begin{proof}[Proof of Theorem~\ref{allcases}]
 We shall use the Hodge $\star$-operator associated to $\psi|W_1$ and the volume form $e_1\wedge \cdots \wedge e_n$ as defined in \cite{FL}, \S3.5, and its natural  generalization to forms of arbitrary degree.  In case $n=2m$ is even, $\star$ maps $\bigwedge^{\text{\rm{even}}} W_1$ to $\bigwedge^{\text{\rm{even}}} W_1$ and $\bigwedge^{\text{\rm{odd}}} W_1$ to $\bigwedge^{\text{\rm{odd}}} W_1$. The operator $\star$ is conjugate linear and hence $E_0$-linear, and, by an argument along the lines of Lemma 3.21 of \cite{FL}, $\star \star = (-1)^{k(n-k)}D \Id$ and hence is equal to $D \Id$ on forms of even degree and $-D \Id$ on forms of odd degree . Note however that there is no reason to expect that $\star$ will commute with the $G_1$-action induced by the inclusion $G_1\subseteq \Spin (W,b)$. Next we use:

\begin{lemma}\label{starcomps}  As operators from $\bigwedge ^kW_1$ to $\bigwedge ^{k\mp 1}W_1$,
\begin{align*}
\star\, \ell(e_i)\,\star &  = (-1)^{n(k+1)}Da_i\iota(e_{n+i});\\
\star\, \iota(e_{n+i})\, \star &  = (-1)^{nk+1}D\lambda^{-1}a_{n+i}\ell(e_i).
\end{align*}
\end{lemma}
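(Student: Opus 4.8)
The plan is to verify both identities by evaluating on the standard monomial basis of $\bigwedge^{\bullet} W_1$, along the lines of Lemma 3.21 of \cite{FL}. First I would record the explicit form of $\star$: for a subset $I = \{i_1 < \cdots < i_k\} \subseteq \{1,\dots,n\}$ write $e_I = e_{i_1}\wedge\cdots\wedge e_{i_k}$, let $I^c$ denote the complement, and let $\epsilon(I,I^c)\in\{\pm 1\}$ be determined by $e_I\wedge e_{I^c} = \epsilon(I,I^c)\,e_1\wedge\cdots\wedge e_n$. The Hodge operator attached to $\psi|W_1$ (which is nondegenerate, since $a_ia_{n+i}=\lambda\neq 0$ forces $a_i\neq 0$ for $i\le n$) and to the volume form $e_1\wedge\cdots\wedge e_n$ is, on this basis, $\star e_I = \epsilon(I,I^c)\bigl(\prod_{i\in I}a_i\bigr)e_{I^c}$, extended conjugate-linearly; this is the arbitrary-degree analogue of the operator of \cite{FL}, \S3.5, and it satisfies $\star\star = (-1)^{k(n-k)}D\,\Id$ on $\bigwedge^k W_1$, as recalled above. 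Because $\star$ is conjugate linear, the composites $\star\,\ell(e_i)\,\star$ and $\star\,\iota(e_{n+i})\,\star$ are $E$-linear, so it suffices to check each identity on the basis vectors $e_I$.

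Next I would dispose of the vanishing cases and isolate the scalar factors. Applying $\star\,\ell(e_i)\,\star$ to $e_I$: if $i\notin I$ then $\ell(e_i)$ annihilates $\star e_I$, a multiple of $e_{I^c}$ with $i\in I^c$, while also $\iota(e_{n+i})e_I = 0$, so both sides vanish. The content is the case $i\in I$, where $\star e_I$ is a multiple of $e_{I^c}$, $\ell(e_i)$ sends it to a multiple of $e_{I^c\cup\{i\}}$, and the outer $\star$ returns a multiple of $e_{I\setminus\{i\}}$, matching $\iota(e_{n+i})e_I = \pm e_{I\setminus\{i\}}$. The two scalars contributed by the two applications of $\star$ multiply to $\bigl(\prod_{i'\in I}a_{i'}\bigr)\bigl(a_i\prod_{i'\in I^c}a_{i'}\bigr) = Da_i$, which is the scalar in the first identity; the mirror-image computation for $\star\,\iota(e_{n+i})\,\star$ produces $D/a_i = D\lambda^{-1}a_{n+i}$ by compatibility, the scalar in the second.

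The point that genuinely needs care — and the main obstacle — is the sign, which is assembled from four ingredients: the Koszul sign $\epsilon(\cdot,\cdot)$ built into $\star$; the sign in reordering $e_i\wedge e_{I^c}$; the sign in the contraction $\iota(e_{n+i})e_I$; and the prefactor $(-1)^{n(k+1)}$, resp.\ $(-1)^{nk+1}$, asserted in the statement. Expanding $e_{I^c\cup\{i\}}\wedge e_{I\setminus\{i\}}$ in terms of $e_{I^c}\wedge e_I = (-1)^{k(n-k)}\epsilon(I,I^c)\,e_1\wedge\cdots\wedge e_n$ gives $\epsilon(I,I^c)\,\epsilon(I^c\cup\{i\},\,I\setminus\{i\}) = (-1)^{(i-1)+(n-k)(k+1)}$, where I use that the number $m_i$ of elements of $I^c$ below $i$ and the number $m_i'$ of elements of $I$ below $i$ satisfy $m_i+m_i' = i-1$ — and these same $m_i,m_i'$ are exactly the reordering and contraction signs. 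Since $n(k+1)-(n-k)(k+1) = k(k+1)$ is even, the exponent $(n-k)(k+1)$ may be replaced by $n(k+1)$, and the asserted prefactor drops out after the $(-1)^{m_i}$ and $(-1)^{m_i'}$ contributions are folded in. The sign computation for the second identity is entirely parallel; alternatively, once the first is established, the second follows by conjugating it with $\star$ and using $\star\star = (-1)^{k(n-k)}D\,\Id$. I expect the final write-up to be short once the conventions are pinned down, the verification of this sign congruence being the only step that must be carried out in detail rather than asserted.
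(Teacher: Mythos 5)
Your proposal is correct and follows essentially the same route as the paper: evaluate both composites on the monomial basis $e_I$, dispose of the common vanishing case, extract the scalar $Da_i$ (resp.\ $D\lambda^{-1}a_{n+i}$ via $a_i^{-1}=\lambda^{-1}a_{n+i}$), and track the sign by combining the $\epsilon$-identity with the Koszul/contraction signs $(-1)^{m_i},(-1)^{m_i'}$ and $m_i+m_i'=i-1$; your alternative of getting the second formula by conjugating the first with $\star$ and using $\star\star=(-1)^{k(n-k)}D\,\Id$ is exactly what the paper does. One small caveat: with your (correct) convention for $\star$, either route for the second identity produces the prefactor $(-1)^{n(k+1)+1}$, which agrees with the stated $(-1)^{nk+1}$ only because $n=2m$ is even --- the standing hypothesis of Theorem~\ref{allcases} --- so that parity assumption should be invoked explicitly at that step.
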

\begin{proof} For $I =\{i_1, \dots, i_k\}$ a  subset of $\{1, \dots, n\}$ with $i_1< \cdots < i_k$ and using the shorthand $e_I$ for $e_{i_1}\wedge \cdots \wedge e_{i_k}$ and $a_I = a_{i_1}\cdots a_{i_k}$, we have $\star e_I= \varepsilon_{I,I'}a_Ie_{I'}$ as in \cite{FL}, \S3.5, where $I' =\{1,2,\dots, n\} -I$ is the complementary index set and $\varepsilon_{I,I'} =\pm 1$ is a sign factor only depending on $k$. Hence
$$\star(e_i\wedge \star e_I) = \begin{cases} 0, &\text{if $i\notin I$}; \\
\star( \varepsilon_{I,I'}a_Ie_i \wedge e_{I'}) = \pm Da_ie_{I -\{i\}},  &\text{if $i\in I$}. 
\end{cases}$$
  With some care as to the sign, one checks that
 $$\star\, \ell(e_i)\star   = (-1)^{n(k+1)}Da_i\iota(e_{n+i}).$$
 This proves the first equality in the lemma. To prove the second equality, use
 $$\star\, \iota(e_{n+i})\star = (-1)^{n(k+1)}D^{-1}a_i^{-1}\star \star\, \ell(e_i)\star \star $$
and the fact that $a_i^{-1} = \lambda^{-1}a_{n+i}$.
\end{proof}

\begin{corollary}\label{morestarcomps} With notation as above, as operators on $\bigwedge ^kW_1$,
\begin{align*}
\star \, \ell(e_i) & = (-1)^ka_i \iota(e_{n+i})\star; \\
\star \, \iota(e_{n+i}) & = (-1)^{k+n+1}\lambda^{-1}a_{n+i}\ell(e_i)\star. \hfill  \qed
\end{align*}
\end{corollary}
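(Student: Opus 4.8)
The plan is to obtain Corollary~\ref{morestarcomps} from Lemma~\ref{starcomps} with essentially no new computation: the two displayed formulas are nothing but the identities of Lemma~\ref{starcomps} with one of the two copies of $\star$ cancelled, using the relation $\star\star = (-1)^{k(n-k)}D\,\Id$ on $\bigwedge^k W_1$ recorded in the proof of Theorem~\ref{allcases}.

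Concretely, I would take each identity of Lemma~\ref{starcomps}, viewed as an equality of operators $\bigwedge^k W_1 \to \bigwedge^{k\mp 1}W_1$, and precompose both sides with $\star\colon \bigwedge^{n-k}W_1 \to \bigwedge^k W_1$. On the left this produces $\star\,\ell(e_i)\,(\star\star)$, respectively $\star\,\iota(e_{n+i})\,(\star\star)$, where the inner $\star\star$ now acts on $\bigwedge^{n-k}W_1$ and so equals the scalar $(-1)^{(n-k)k}D$; on the right it produces $\iota(e_{n+i})\,\star$, respectively $\ell(e_i)\,\star$, multiplied by the scalar already present. Cancelling the common nonzero factor $D$ leaves, as operators on $\bigwedge^{n-k}W_1$,
$$\star\,\ell(e_i) = (-1)^{n(k+1) + (n-k)k}a_i\,\iota(e_{n+i})\,\star, \qquad \star\,\iota(e_{n+i}) = (-1)^{nk+1 + (n-k)k}\lambda^{-1}a_{n+i}\,\ell(e_i)\,\star.$$
Re-indexing by the degree $\ell = n-k$ on which these identities live, and reducing the exponents modulo $2$ — using that $n(n+1)$ is even, so that $n(k+1)+(n-k)k \equiv \ell$ and $nk+1+(n-k)k \equiv \ell + n + 1 \pmod 2$ — and finally renaming $\ell$ back to $k$, yields precisely the two formulas $\star\,\ell(e_i) = (-1)^k a_i\,\iota(e_{n+i})\star$ and $\star\,\iota(e_{n+i}) = (-1)^{k+n+1}\lambda^{-1}a_{n+i}\,\ell(e_i)\star$ of the corollary.

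There is no genuine obstacle here: the only point requiring care is degree bookkeeping, since each of $\ell(e_i)$, $\iota(e_{n+i})$ and $\star$ shifts the form-degree and $\star\star$ carries a degree-dependent sign, so one must track which $\bigwedge^j W_1$ each intermediate identity lives on and which occurrence of $\star$ is being cancelled. As an alternative to invoking Lemma~\ref{starcomps}, one could instead verify both formulas from scratch by evaluating each side on a monomial $e_I = e_{i_1}\wedge\cdots\wedge e_{i_k}$ and splitting into the cases $i \in I$ and $i \notin I$, exactly as in the proof of that lemma; this is equally short but duplicates the sign analysis already carried out there.
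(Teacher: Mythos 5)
Your derivation is correct and is exactly the intended route: the paper states the corollary with a terminal $\qed$ precisely because it follows from Lemma~\ref{starcomps} by cancelling one $\star$ via the scalar identity $\star\star=(-1)^{k(n-k)}D\,\Id$ (recalled just before the lemma), together with $a_i^{-1}=\lambda^{-1}a_{n+i}$ and $n$ even to reduce the signs. Your degree bookkeeping, including the reindexing by $\ell=n-k$ and the parity reductions, checks out.
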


\begin{definition} Define   $L_+\colon \Res_{E/E_0}S^+\to \Res_{E/E_0}S^+$ and $L_-\colon \Res_{E/E_0}S^-\to \Res_{E/E_0}S^-$ as follows: in all cases, $L_\pm|\bigwedge ^kW_1 = (-1)^d\lambda^{-d}\,\star$, where $\D d= \left[\frac{k-m}{2}\right]$. Explicitly,  for $m$ even,  $L_+\colon \bigwedge ^{m+2d}W_1\to \bigwedge ^{m-2d}W_1$ is given by $(-1)^d\lambda^{-d}\,\star$ and $L_-\colon \bigwedge ^{m+2d+1}W_1\to \bigwedge ^{m-2d-1}W_1$ by $(-1)^d\lambda^{-d}\,\star$. For $m$ odd,   $L_+|\bigwedge ^{m+2d+1}W_1 = (-1)^d\lambda^{-d}\,\star$ and $L_-|\bigwedge ^{m+2d}W_1 = (-1)^d\lambda^{-d}\,\star$.
\end{definition}

We can now complete the proof of Theorem~\ref{allcases}. Clearly, the operators $L_{\pm}$ are conjugate linear. It is easy to check that, if $m$ is even,  $L_+^2 = D\Id = (-1)^mD\Id$ and $L_-^2 = D\lambda\Id = (-1)^mD\lambda\Id$, whereas if $m$ is odd then  $L_+^2 = -D\lambda\Id = (-1)^mD\lambda\Id$ and $L_-^2 = -D\Id = (-1)^mD\Id$. Finally we must show that  the $L_{\pm}$ commute with the action of $G_1$.  Since $G_1$ is connected, it suffices  to show that the $L_\pm$ commute with the action of $\mathfrak{g}_1 = \mathfrak{g}_0$, for which we have written down a basis in Lemma~\ref{Liealg}. By \cite{fultonharris}, for $r<s$, the element $X_{rs}\in \mathfrak{so}(W,b)$ corresponding to the element $e_r\wedge e_s\in \bigwedge^2W_1$ then corresponds to the element $e_re_s- b(e_r, e_s)$ of $C(W,b)$, and hence is equal to $e_re_s$ except for the case $r=i$, $s=n+i$, in which case it corresponds to the element $e_ie_{n+i} - \frac12$ (with our scaling conventions on $b$ in this section). Thus for example if $1\leq i< j\leq n$, $X_{ij}$ corresponds to $e_ie_j$, and hence via the isomorphism $C^{\text{\rm{even}}}(W,b) \cong   \End(\bigwedge^{\text{\rm{even}}}W_1)\oplus \End(\bigwedge^{\text{\rm{odd}}}W_1)$ to the operator $\ell(e_i)\ell(e_j)$, and similarly $X_{n+i, n+j}$ corresponds to   $\iota(e_{n+i})\iota(e_{n+j})$. Now a brute force computation  completes the proof. For example, for $1\leq i < j\leq n$, to see that   $L_\pm$ commutes with $T=a_{n+i}\ell(e_i)\ell(e_j) + a_j\iota(e_{n+i})\iota(e_{n+j})$, we  let both sides act on $\bigwedge^kW_1$. With $\D d = \left[\frac{k-m}{2}\right]$, we must compare the two expressions
\begin{align*}
TL_\pm   & = (-1)^d\lambda^{-d} a_{n+i}\ell(e_i)\ell(e_j)\star   + (-1)^d\lambda^{-d}a_j\iota(e_{n+i})\iota(e_{n+j})\star ; \\
L_\pm T  & = (-1)^{d+1}\lambda^{-d-1} a_{n+i}\star \ell(e_i)\ell(e_j)   + (-1)^{d-1}\lambda^{-d+1}a_j\star \iota(e_{n+i})\iota(e_{n+j})  .
\end{align*}
Using Corollary~\ref{morestarcomps}, it follows  that 
\begin{align*}
(-1)^{d-1}\lambda^{-d+1}a_j\star \iota(e_{n+i})\iota(e_{n+j}) &= (-1)^{d-1+k-1}\lambda^{-d+1}a_j\lambda^{-1}a_{n+i}\ell(e_i)\star \, \iota(e_{n+j})\\
&= (-1)^{d-1+k-1+k}\lambda^{-d}a_ja_{n+i}a_{n+j}\ell(e_i)\ell(e_j)\star\\
&= (-1)^d\lambda^{-d} a_{n+i}\ell(e_i)\ell(e_j)\star.
\end{align*}
The other terms are similar.
This concludes the proof of Theorem~\ref{allcases}.
\end{proof}

\begin{remark} We sketch another proof for the somewhat mysterious fact that the operators $L_\pm$ commute with the action of $G_1$, the main point of the proof of Theorem~\ref{allcases}.   Let $\Phi\colon W \to W'$ be an $E$-linear isomorphism and let $b_\Phi$ be the induced bilinear form on $W'$ defined in Section 1. Then there is an induced isomorphism of $E$-algebras $\Phi^*\colon C(W, b) \to C(W', b_\Phi)$. For example, the form $b$ induces an isomorphism $B^* \colon C(W, b) \to C(W\spcheck, b\spcheck)$. Given a conjugate linear isomorphism $\Psi\colon W \to W'$, there is the induced $E$-bilinear form $\bar{b}_\Psi$ defined in Section 1.\ Using $\Psi$ to define (naturally) a conjugate linear isomorphism from the tensor algebra of $W$ to the tensor algebra of $W'$, it is easy to see that $\Psi$ carries the defining relation $v\otimes v - b(v,v)\cdot 1$ to  $\Psi(v)\otimes \Psi(v) - \overline{b(v,v)}\cdot 1$, which is just the defining relation  $\Psi(v)\otimes \Psi(v) - \bar{b}_\Psi(\Psi(v),\Psi(v)) \cdot 1$. Hence $\Psi$ induces a conjugate linear isomorphism, denoted $\Psi^*$, from $C(W, b)$ to $C(W', \bar{b}_\Psi)$. Note that $\Psi^*$ as well as $\Phi^*$ preserve the decomposition into even and odd degrees. Finally, if $t\in E^*$, define $h_t \colon T^*W \to T^*W$ by: $h_t$ is multiplication by $t^{[k/2]}$ on the graded homogeneous piece $T^{k}W$.   Then $h_t \colon T^{\text{\rm{even}}}W \to T^{\text{\rm{even}}}W$ is an algebra isomorphism which descends to an algebra isomorphism (also denoted $h_t$) from $C^{\text{\rm{even}}}(W,b)$ to $C^{\text{\rm{even}}}(W,t^{-1}b)$. Moreover, the action of $h_t$ descends to an operator $h_t$ from from $C^{\text{\rm{odd}}}(W,b)$ to $C^{\text{\rm{odd}}}(W,t^{-1}b)$ which is compatible with its  structure as a module over $C^{\text{\rm{even}}}(W,b)$.

In particular, if   $b$ and $\psi$ are compatible forms as defined in Definition~\ref{goodcomp}, we have the $E$-linear algebra isomorphism $B^*\colon  C(W, b) \to C(W\spcheck, b\spcheck)$ and the conjugate linear algebra isomorphism $\Psi^* \colon C(W, b) \to C(W\spcheck, \lambda^{-1}b\spcheck)$. Combining, we have a conjugate linear isomorphism $J^* = (B^*)^{-1}\circ \Psi^*\colon C(W,b) \to C(W, \lambda^{-1}b)$, with $(J^*)^2 = \lambda\Id$. Composing with the operator $h_{\lambda^{-1}}$  gives a conjugate linear algebra isomorphism
$$\mathcal{L} = h_{\lambda^{-1}} \circ J^* = h_{\lambda^{-1}}\circ (B^*)^{-1}\circ \Psi^*\colon C^{\text{\rm{even}}}(W,b) \to C^{\text{\rm{even}}}(W,b).$$
Note that $\mathcal{L}$ sends $C^{\text{\rm{even}}}_{2d}(W,b)$ to $C^{\text{\rm{even}}}_{-2d}(W,b)$, and a calculation gives $\mathcal{L}^2 = \Id$, so that $\mathcal{L}$ is a conjugate linear involution of $C^{\text{\rm{even}}}(W,b)$. A straightforward argument shows that $\mathcal{L}(g) = g$ for all $g\in G_1$, hence $\mathcal{L}(g\xi) = g\mathcal{L}(\xi)$. Similarly, there is  a conjugate linear map $\mathcal{M} \colon C^{\text{odd}}(W,b) \to C^{\text{odd}}(W,b)$ of the form $h_{\lambda^{-1}}\circ J^*$. It satisfies:  $\mathcal{M}^2 = \lambda \Id$ and, for all $\xi \in C^{\text{even}}(W,b)$ and $\eta \in C^{\text{odd}}(W,b)$,  $\mathcal{M}(\xi\eta) = \mathcal{L}(\xi)\mathcal{M}(\eta)$. In particular, for $g\in G_1$,
$$\mathcal{M}(g\eta) = \mathcal{L}(g)\mathcal{M}(\eta) = g\mathcal{M}(\eta).$$
Via the algebra isomorphism $C^{\text{\rm{even}}}(W,b) \cong \mathcal{E} =\End(\bigwedge^{\text{\rm{even}}} W_1)\oplus \End(\bigwedge^{\text{\rm{odd}}} W_1)$,  view $\mathcal{L}$ as an involution on $\mathcal{E}$. Then a computation using Lemma~\ref{starcomps} shows that $\mathcal{L}$  preserves the two factors in the direct sum and can be computed as follows: For $A \in \End^{2d}(\bigwedge^{\text{\rm{even}}} W_1)$, 
$$\mathcal{L}(A) = (-1)^d\lambda^{-d}D^{-1}\star\,  A \, \, \star,$$
and for $A \in \End^{2d}(\bigwedge^{\text{\rm{odd}}} W_1)$, 
$$\mathcal{L}(A) = (-1)^{d+1}\lambda^{-d}D^{-1}\star\,  A \, \, \star.$$
There are similar formulas for $\mathcal{M}$.
 
There is no reason to expect $\mathcal{L}$  or $\mathcal{M}$ to induce operators on $S^\pm$ which commute with $G_1$. However, if $\alpha \in\bigwedge^mW_1$, say,  is nonzero, then $C^{\text{\rm{even}}}(W,b)\cdot \alpha = \mathcal{E}\cdot \alpha$ is equal to $S^+$ if $m$ is even and $S^-$ if $m$ is odd since $S^\pm$ is a simple $C^{\text{\rm{even}}}(W,b)$-module.  Similarly,  $C^{\text{\rm{odd}}}(W,b)\cdot \alpha=S^-$ if $m$ is even and $S^+$ if $m$ is odd. Hence, depending on the parity of $m$,  $S^\pm \cong \mathcal{E}/I_\alpha^{\text{\rm{even}}}$, where $I_\alpha^{\text{\rm{even}}}$ is  the left ideal in $\mathcal{E}$ corresponding to the kernel of  evaluation at the point $\alpha$. In particular, if $\mathcal{L}(I_\alpha^{\text{\rm{even}}}) = I_\alpha^{\text{\rm{even}}}$,   there is an induced  conjugate linear action of $\mathcal{L}$ on $S^\pm$ which commutes with the $G_1$-action and satisfies $\mathcal{L}^2 = \Id$. Explicitly, the action of $\mathcal{L}$ is given by $\mathcal{L}(A\cdot \alpha) = \mathcal{L}(A)\cdot \alpha$. Now suppose that   $\alpha \in \bigwedge^mW_1$ satisfies $\star \alpha = t\alpha$ for some $t\in E$. It is easy to see that this happens  $\iff$ $(-1)^mD = \Nm_{E/E_0}(c)$ for some $c\in E$. Then it is straightforward to check that     $\mathcal{L}(I_\alpha^{\text{\rm{even}}}) = I_\alpha^{\text{\rm{even}}}$ and hence that there is an induced action of $\mathcal{L}$ on $S^\pm$. Explicitly, for $\eta \in \bigwedge^{m+2d}W_1$, and using $(-1)^mD = t\bar{t}$,
$$\mathcal{L}(\eta) = (-1)^{m+d}\lambda^{-d}D^{-1}\bar{t} (\star \eta) = (-1)^d\lambda^{-d}(t^{-1}  \star )(\eta).$$
Thus, up to the scalar $t^{-1}\in E^*$, the operator $\mathcal{L}$ is exactly $L_\pm$ depending on the parity of $m$. Using  $S^\mp = C^{\text{\rm{odd}}}(W,b)\cdot \alpha$, a  similar argument shows that, if $\star \alpha = t\alpha$ for some $t\in E$, then  the operator $\mathcal{M}$ preserves the $C^{\text{\rm{even}}}(W,b)$-submodule $I^{\text{\rm{odd}}}_\alpha$ of $C^{\text{\rm{odd}}}(W,b)$ given by
$$I^{\text{\rm{odd}}}_\alpha =\{A\in C^{\text{\rm{odd}}}(W,b): A\cdot \alpha = 0\}.$$ Then,  as before, $\mathcal{M}$ defines a conjugate linear operator on $S^\mp$ which   commutes with the $G_1$-action and which is $L_\mp$ up to multiplication by an element of $E^*$. Thus we reprove the fact that $L_+$ and $L_-$ commute  with the action of $G_1$ in a slightly more conceptual fashion,  under the assumption that $(-1)^mD$ is a norm. Finally, if $(-1)^mD$ is not a norm, we can pass to quadratic extensions $E_0'$ and $E'=EE_0'$ of $E_0$ and $E$ respectively, such that $(-1)^mD$ is a square in $E_0'$ and hence lies in $\Nm_{E'/E_0'}(E')^*$. There is then an  operator induced by $\mathcal{L}$ or $\mathcal{M}$ on $(\Res_{E/E_0}S^\pm) \otimes E_0' = \Res_{E'/E_0'}(S^\pm\otimes _EE')$, and it  is a multiple of $L_\pm\otimes \Id_{E_0'}$ by a nonzero element of $(E')^*$. Thus $L_\pm \otimes \Id_{E_0'}$ commutes with the action of $G_1$, and hence the same must be true for $L_\pm$. This then gives another proof of Theorem~\ref{allcases} in general.
\end{remark}

\bibliography{refcy}

\end{document}